\theoremstyle{plain}\newtheorem{Theorem}{Theorem}[section]
\theoremstyle{plain}
\theoremstyle{plain}
\theoremstyle{plain}\newtheorem{Lemma}[Theorem]{Lemma}
\theoremstyle{plain}\newtheorem{Proposition}[Theorem]{Proposition}
\theoremstyle{plain}
\theoremstyle{definition}
\theoremstyle{definition}\newtheorem{Example}[Theorem]{Example}
\theoremstyle{definition}
\theoremstyle{definition}
\theoremstyle{definition}\newtheorem{Remark}[Theorem]{Remark}
\theoremstyle{definition}
\def\CL{{\mathcal{L}}}
\def\Aut{\mathrm{Aut}}           \def\tenk{\otimes_k}
\def\chr{\mathrm{char}}
\def\Der{\mathrm{Der}}
\def\dim{\mathrm{dim}}           
\def\End{\mathrm{End}}           
\def\Ext{\mathrm{Ext}}
\def\HH{H\!H}
\def\Hom{\mathrm{Hom}}
\def\ker{\mathrm{ker}}           
\def\IDer{\mathrm{IDer}}
\def\ll{\ell\!\ell}
\def\op{\mathrm{op}}
\def\rad{\mathrm{rad}}
\def\sl{\mathfrak{sl}}
\def\soc{\mathrm{soc}}
\title{On the Lie algebra structure of $\HH^1(A)$ of a 
finite-dimensional algebra $A$} 
\author{Markus Linckelmann and Lleonard Rubio y Degrassi} 
 \date{}
\subjclass[2010]{16E40, 16G30, 16D90, 17B50}
\begin{document}

\begin{abstract}
Let $A$ be a split finite-dimensional associative unital algebra over 
a field. The first main result of this note shows that if the 
$\Ext$-quiver of $A$ is a  simple directed graph, then $\HH^1(A)$ is 
a solvable Lie algebra. The second main result shows that if the 
$\Ext$-quiver of $A$ has no loops and at most two parallel arrows in 
any direction, and if $\HH^1(A)$ is a simple Lie algebra, then 
$\chr(k)\neq$ $2$ and $\HH^1(A)\cong$ $\sl_2(k)$. The third result
investigates symmetric algebras with a quiver which has a vertex with
a single loop. 
\end{abstract}

\maketitle

\section{Introduction}

Let $k$ be a field. Our first result is a sufficient criterion for
$\HH^1(A)$ to be a solvable Lie algebra, where $A$ is a split
finite-dimensional $k$-algebra (where the term `algebra' without
any further specifications means an associative and
unital algebra).

\begin{Theorem} \label{Ext1-1dim-thm}
Let $A$ be a split finite-dimensional $k$-algebra. Suppose that the 
$\Ext$-quiver of $A$ is a simple directed graph. Then the derived Lie 
subalgebra of $\HH^1(A)$ is nilpotent;  in particular the Lie algebra 
$\HH^1(A)$ is solvable.
\end{Theorem}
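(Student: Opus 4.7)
The plan is to represent classes in $\HH^1(A)$ by normalized derivations, extract an abelian Lie quotient from the action on $J/J^2$, and show that its kernel is nilpotent via the radical filtration. By Morita invariance of $\HH^1$ and of the $\Ext$-quiver, first reduce to the case where $A$ is basic. A Wedderburn--Malcev decomposition then gives $A = S \oplus J$ with $J = \rad(A)$ and $S = \bigoplus_{i=1}^n k e_i$ for a complete set of primitive orthogonal idempotents $e_1, \ldots, e_n$. The simple-graph hypothesis translates into $e_i(J/J^2) e_i = 0$ for all $i$ and $\dim_k e_i(J/J^2) e_j \leq 1$ for $i \neq j$. Next I would normalize: every class in $\HH^1(A)$ is represented by a derivation $d$ with $d(e_i) = 0$ for all $i$, since for any initial representative $d_0$ the element $x = -\sum_i d_0(e_i) e_i$ satisfies $[e_i, x] = d_0(e_i)$, so $d_0 + \mathrm{ad}_x$ does the job. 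For such $d$ one has $d(e_i A e_j) \subseteq e_i A e_j$, and combining the inclusion $e_i J e_i \subseteq J^2$ (the no-loop part of the hypothesis) with the Leibniz rule and the two-sided-ideal property of $J$ gives $d(J) \subseteq J$.

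The key construction is a Lie-algebra homomorphism to an abelian target. The induced endomorphism $\bar d$ of $J/J^2$ is $S$-bilinear and so preserves the decomposition $J/J^2 = \bigoplus_{i \neq j} e_i (J/J^2) e_j$, acting on each at-most-one-dimensional summand by a scalar $\lambda_{ij}(d) \in k$. A direct calculation shows that for $x = \sum_i(\alpha_i e_i + u_i) \in A^S$, with $\alpha_i \in k$ and $u_i \in e_i J e_i$, the inner derivation $\mathrm{ad}_x$ acts on $e_i (J/J^2) e_j$ by the scalar $\alpha_i - \alpha_j$. Hence $d \mapsto (\lambda_{ij}(d))$ descends to a well-defined linear map $\Psi \colon \HH^1(A) \to k^r / W$, where $r$ is the number of arrows of the $\Ext$-quiver and $W \subseteq k^r$ is the image of $k^n \to k^r$, $(\alpha_i) \mapsto (\alpha_i - \alpha_j)_{(i,j)}$. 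Because the $\lambda_{ij}(d)$ are scalars, the endomorphisms $\bar d$ and $\bar{d'}$ commute on $J/J^2$, so $\Psi([d, d']) = 0$; thus $\Psi$ is a Lie homomorphism to the abelian target $k^r / W$, and its kernel $L$ contains $[\HH^1(A), \HH^1(A)]$.

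To show $L$ is nilpotent I would introduce the filtration $L^{(n)} := \{[d] \in \HH^1(A) : [d] \text{ has a representative with } d(J) \subseteq J^{n+1}\}$, so that $L = L^{(1)}$. By the Leibniz rule any such representative satisfies $d(J^k) \subseteq J^{k+n}$, whence a short bracket computation yields $[L^{(n)}, L^{(m)}] \subseteq L^{(n+m)}$. Since $J$ is nilpotent, $L^{(n)} = 0$ for $n$ sufficiently large, so the lower central series of $L$ terminates and $L$ is nilpotent. Consequently $[\HH^1(A), \HH^1(A)] \subseteq L$ is nilpotent, proving that $\HH^1(A)$ is solvable.

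The main technical hurdle is the bookkeeping around inner derivations: verifying that $\Psi$ is well-defined on $\HH^1(A)$ with precisely the abelian target $k^r / W$, and that the filtration $L^{(n)}$ interacts correctly with the bracket once one passes from derivations to cohomology classes. The two parts of the simple-graph hypothesis play distinct roles---the no-loops condition secures $d(J) \subseteq J$ after normalization, while the at-most-one-arrow condition is what makes the target of $\Psi$ abelian---and tracking these two uses separately is the key conceptual point.
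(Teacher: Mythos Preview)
Your proposal is correct and follows essentially the same route as the paper: normalize derivations to vanish on the semisimple part, use the no-loops hypothesis to get $d(J)\subseteq J$, map to an abelian quotient governed by the action on $J/J^2$, and show the kernel is nilpotent via the radical filtration $[D_m,D_n]\subseteq D_{m+n-1}$. The only cosmetic difference is that the paper packages the abelian target as $\HH^1(A/J(A)^2)$ (shown to be abelian in Proposition~\ref{Ext1-1dim}) rather than your explicit model $k^r/W$, but these are canonically identified and the two maps have the same kernel.
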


The recent papers \cite{FR} and \cite{RSS} contain comprehensive
results regarding the solvability of $\HH^1(A)$ of tame algebras and 
blocks, and \cite{RSS} also contains a proof of Theorem 
\ref{Ext1-1dim-thm} with different methods. 
We will prove Theorem \ref{Ext1-1dim-thm} in Section 
\ref{proofs-Section} as part of the more precise Theorem 
\ref{Ext1-1dim-ll}, bounding the derived length of the Lie algebra 
$\HH^1(A)$ and the nilpotency class of the derived Lie subalgbra of 
$\HH^1(A)$ in terms of the Loewy length $\ll(A)$ of $A$. 
The hypothesis on the quiver of $A$ is equivalent to 
requiring that $\Ext_A^1(S,S)=$ $\{0\}$ for any simple $A$-module $S$ 
and $\dim_k(\Ext^1_A(S,T))\leq 1$ for any two simple $A$-modules $S$, 
$T$. If in addition $A$ is monomial, then Theorem \ref{Ext1-1dim-thm}
follows from work of Strametz \cite{Strametz}. 
The hypotheses on $A$ are not necessary for the derived Lie subalgebra 
of $\HH^1(A)$ to be nilpotent or for  $\HH^1(A)$ to be solvable; see 
\cite[Theorem 1.1]{BKL} or \cite{RSS} for examples. 

\medskip
The Lie algebra structure of $\HH^1(A)$ is invariant under derived 
equivalences, and for symmetric algebras, also invariant under stable 
equivalences of Morita type. 
Therefore, the conclusions of Theorem \ref{Ext1-1dim-thm} remain true
for any finite-dimensional $k$-algebra $B$ which is derived equivalent 
to an algebra $A$ satisfying the hypotheses of this theorem, or for a 
symmetric $k$-algebra $B$ which is stably equivalent of Morita type to 
a symmetric algebra $A$ satisfying the hypotheses of the theorem.

\medskip
If we allow up to two parallel arrows in the same direction in the 
quiver of $A$ but no loops, then it is possible for $\HH^1(A)$ to be 
simple as a Lie algebra. The only simple Lie algebra to arise in that 
case is $\sl_2(k)$, with $\chr(k)\neq$ $2$. 

\begin{Theorem} \label{Ext1-2dim-thm}
Let $A$ be a split finite-dimensional $k$-algebra. Suppose that
$\Ext_A^1(S,S)=$ $\{0\}$ for any simple $A$-module $S$ and that 
$\dim_k(\Ext^1_A(S,T))\leq 2$ for any two simple $A$-modules $S$, $T$.
If $\HH^1(A)$ is not solvable, then $\chr(k)\neq 2$ and 
$\HH^1(A)/\rad(\HH^1(A))$ is a direct product of finitely many copies
of $\sl_2(k)$. In particular, the following hold.

\begin{enumerate}
\item[\rm (i)]
If $\HH^1(A)$ is a simple Lie algebra, then $\chr(k)\neq 2$, and
$\HH^1(A)\cong$ $\sl_2(k)$. 

\item[\rm (ii)]
If $\chr(k)=2$, then $\HH^1(A)$ is a solvable Lie algebra.
\end{enumerate}
\end{Theorem}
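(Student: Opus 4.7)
The strategy is to construct a Lie algebra homomorphism $\Phi$ from $\HH^1(A)$ into a product of copies of $\mathfrak{gl}_2(k)$ with nilpotent kernel, and then to use that in characteristic different from $2$ the only non-solvable Lie subalgebra of $\sl_2(k)$ is $\sl_2(k)$ itself.

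First, since $A$ is split, the Wedderburn--Malcev theorem provides a separable subalgebra $S \subseteq A$ with $A = S \oplus J$ as $k$-vector spaces, where $J = \rad(A)$, such that every class in $\HH^1(A)$ is represented by a derivation $\delta : A \to A$ vanishing on $S$. Any such $\delta$ is automatically $(S,S)$-bilinear, so it preserves the decomposition $A = \bigoplus_{i,j} e_i A e_j$ associated with a complete set $e_1, \dots, e_n$ of pairwise orthogonal primitive idempotents of $S$. The no-loops hypothesis gives $e_i J e_i \subseteq J^2$ for every $i$, and a short Leibniz-rule calculation then shows that $\delta$ preserves the radical filtration: $\delta(J^r) \subseteq J^r$ for every $r \geq 0$. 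Consequently $\delta$ induces a $k$-linear endomorphism of $J/J^2 = \bigoplus_{i \neq j} V_{ij}$, where $V_{ij} = e_i (J/J^2) e_j$, preserving each $V_{ij}$ individually. By the hypothesis $\dim_k V_{ij} \leq 2$, this yields a Lie algebra homomorphism
\[
\Phi : \HH^1(A) \longrightarrow \prod_{i \neq j} \mathfrak{gl}(V_{ij}),
\]
with image in a product of copies of $\mathfrak{gl}_2(k)$ and $\mathfrak{gl}_1(k)$.

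Next, $\ker \Phi$ is represented by derivations with $\delta(J) \subseteq J^2$; the Leibniz rule then gives $\delta(J^r) \subseteq J^{r+1}$ for all $r$, so iterated brackets of such derivations shift the radical filtration ever higher, making $\ker \Phi$ a nilpotent Lie ideal of $\HH^1(A)$ (cf.\ the proof of Theorem~\ref{Ext1-1dim-thm}). Hence $\ker \Phi \subseteq \rad(\HH^1(A))$, and $\HH^1(A)/\rad(\HH^1(A))$ is a semisimple quotient of the subalgebra $\Im(\Phi)$ of $\prod \mathfrak{gl}(V_{ij})$. I would now split on the characteristic. In characteristic $2$, the identity matrix lies in $\sl_2(k)$ and is central there with abelian quotient, so $\sl_2(k)$ is nilpotent and $\mathfrak{gl}_2(k) = kI \oplus \sl_2(k)$ is solvable; hence $\Im(\Phi)$ is solvable and so is $\HH^1(A)$, proving~(ii). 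In characteristic $\neq 2$, each four-dimensional $\mathfrak{gl}(V_{ij})$ decomposes as $k \cdot \mathrm{id} \oplus \sl(V_{ij})$ with $\sl(V_{ij}) \cong \sl_2(k)$ simple, and the direct sum of the scalar summands is an abelian ideal of $\prod \mathfrak{gl}(V_{ij})$; its intersection with $\Im(\Phi)$ is an abelian, hence solvable, ideal. Thus $\HH^1(A)/\rad(\HH^1(A))$ identifies with $M/\rad(M)$ for some Lie subalgebra $M$ of $\prod \sl_2(k)$.

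Finally, I would show that for $M \leq \prod \sl_2(k)$ with $\chr(k) \neq 2$, the quotient $M/\rad(M)$ is a direct product of copies of $\sl_2(k)$. Any minimal ideal $N$ of $M/\rad(M)$ must be non-abelian (as $M/\rad(M)$ has no nonzero abelian ideals), hence perfect; projecting $N$ to each factor of $\prod \sl_2(k)$ yields a perfect subalgebra of $\sl_2(k)$, which must be either $0$ or all of $\sl_2(k)$, and the minimality of $N$ as an $(M/\rad(M))$-submodule of itself under the adjoint action forces each nonzero projection to be an isomorphism $N \cong \sl_2(k)$. A centralizer argument, using that $Z(\sl_2(k)) = 0$ and $\Der(\sl_2(k)) = \mathrm{ad}(\sl_2(k))$ in characteristic $\neq 2$, shows that $M/\rad(M)$ is the direct sum of its minimal ideals. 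Statement~(i) follows at once, since a simple $\HH^1(A)$ forces a single factor. I expect this last verification to be the main technical point, because in positive characteristic a semisimple Lie algebra need not decompose as a direct sum of simple ideals in general; here the rigid embedding into $\prod \sl_2(k)$ is what provides the extra structure to force such a decomposition.
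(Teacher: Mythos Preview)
Your overall strategy is sound and, once patched, yields a more self-contained argument than the paper's, which outsources the structure of the target to Strametz's computation of $\HH^1$ for radical-square-zero monomial algebras. But there is a real gap: the map $\Phi : \HH^1(A) \to \prod_{i\neq j} \mathfrak{gl}(V_{ij})$ is not well-defined. An inner derivation $[c,-]$ with $c$ centralising your separable subalgebra $S$ need not act trivially on $J/J^2$: if $A$ is basic and $c = \sum_i \lambda_i e_i + r$ with $r \in J$, then $[c,-]$ acts on $V_{ij}$ as multiplication by $\lambda_i - \lambda_j$. So what you actually have is a Lie homomorphism out of the space $D_1$ of derivations vanishing on $S$, not out of $\HH^1(A) = D_1/(D_1 \cap \IDer(A))$.

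There are two easy repairs. One is to compose with the projection to $\prod \mathfrak{pgl}(V_{ij})$, since the inner derivations land in the scalars; in characteristic $\neq 2$ this target is isomorphic to $\prod \sl_2(k)$, and in characteristic $2$ it is still solvable (note, though, that your decomposition $\mathfrak{gl}_2 = kI \oplus \sl_2$ fails in characteristic $2$ because $I \in \sl_2$; the solvability conclusion survives since $\mathfrak{gl}_2/\sl_2 \cong k$). The other repair is what the paper does: map to $\HH^1(A/J^2)$, where inner derivations are handled automatically on both sides, and invoke Strametz for the structure of that target. One further wrinkle in your final step: you project minimal ideals of $M/\rad(M)$ to factors of $\prod \sl_2$, but $M/\rad(M)$ is a \emph{quotient} of $M \leq \prod \sl_2$, not a subalgebra, so those projections are not directly available on $M/\rad(M)$. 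The paper's one-line version---every proper Lie subalgebra of $\sl_2(k)$ is solvable, applied to the projections of $M$ itself---sidesteps this by working upstairs in $M$ rather than in the semisimple quotient.
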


This will be proved in Section \ref{proofs-Section}; for monomial
algebras this follows as before from Strametz \cite{Strametz}. 
An example of an algebra $A$ satisfying the hypotheses of this theorem 
is the Kronecker algebra, a $4$-dimensional $k$-algebra, with 
$\chr(k)\neq 2$, given by the 
directed quiver with two vertices $e_0$, $e_1$ and two parallel arrows 
$\alpha$, $\beta$ from $e_0$ to $e_1$. This example is a special case 
of more general results on monomial algebras; see in particular 
\cite[Corollary 4.17]{Strametz}.  As in the case of the previous
Theorem, the conclusions of Theorem \ref{Ext1-2dim-thm} remain true
for an algebra $B$ which is derived equivalent to an algebra $A$
satisfying the hypotheses of this theorem, or for a symmetric algebra
$B$ which is stably equivalent of Morita type to a symmetric algebra
$A$ satisfying the hypotheses of the theorem. 

We have the following partial result for symmetric algebras whose
quiver has a single loop at some vertex. 

\begin{Theorem} \label{loop}
Suppose that $k$ is algebraically closed. Let $A$ be a
finite-dimensional symmetric $k$-algebra, and let $S$ be a simple
$A$-module. Suppose that $\dim_k(\Ext_A^1(S,S))=$ $1$ and that for any
primitive idempotent $i$ in $A$ satisfying $iS\neq$ $\{0\}$ we have
$J(iAi)^2=$ $iJ(A)^2i$. If $\HH^1(A)$ is a simple Lie algebra, then
$\chr(k)=$ $p>2$ and $\HH^1(A)$ is isomorphic to either $\sl_2(k)$ or 
the Witt Lie algebra $W=$ $\Der(k[x]/(x^p))$.
\end{Theorem}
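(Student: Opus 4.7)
The plan is to construct a Lie-algebra homomorphism $\rho : \HH^1(A) \to \Der(eAe)$, where $e$ is a primitive idempotent of (the basic algebra of) $A$ associated with $S$, to identify $eAe$ with a truncated polynomial ring, and then to apply simplicity of $\HH^1(A)$ together with a classification of simple subalgebras of the derivation algebra of that ring.

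Since the Lie structure on $\HH^1$ is Morita invariant, I may first assume $A$ is basic; the hypotheses transfer. Then $eAe$ is basic, local, and symmetric, with $J(eAe) = eJ(A)e$. The assumption $J(eAe)^2 = eJ(A)^2 e$ together with $\dim_k \Ext_A^1(S,S) = 1$ gives
\begin{equation*}
\dim_k J(eAe)/J(eAe)^2 \;=\; \dim_k eJ(A)e/eJ(A)^2 e \;=\; \dim_k \Ext_A^1(S,S) \;=\; 1,
\end{equation*}
so since $k$ is algebraically closed, Nakayama's lemma forces $eAe \cong k[x]/(x^n)$ for some integer $n \geq 2$.

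To construct $\rho$, I would represent each class in $\HH^1(A)$ by a derivation $d$ of $A$ fixing $e$. This is possible because $f := d(e)$ satisfies $f = ef + fe$ and, in characteristic $\neq 2$, also $efe = 0$, so replacing $d$ by $d - [\,\cdot\,,x]$ with $x = ef(1-e) - (1-e)fe$ kills $d(e)$. Such a $d$ restricts to a derivation of $eAe$. An inner derivation $[\,\cdot\,,x]$ of $A$ fixing $e$ forces $x \in eAe \oplus (1-e)A(1-e)$, so its restriction to $eAe$ is inner and hence zero as $eAe$ is commutative. Thus $\rho : \HH^1(A) \to \Der(k[x]/(x^n)) = \HH^1(eAe)$ is a well-defined Lie-algebra morphism.

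Finally, simplicity of $\HH^1(A)$ forces $\ker\rho$ to be either zero or the whole algebra. Ruling out the case $\rho = 0$ is the first main obstacle: one would exhibit a non-inner derivation of $A$ whose restriction to $eAe$ is the Euler derivation $x\partial$, or argue directly that $\rho = 0$ forces $\HH^1(A)$ to be solvable; note that when $n = 2$, the target $\Der(k[x]/(x^2))$ is already $2$-dimensional and solvable, so $\rho = 0$ automatically and this degenerate case must be handled separately. In the injective case, $\HH^1(A)$ embeds as a simple subalgebra of the truncated Witt algebra $L = \Der(k[x]/(x^n))$, which has basis $L_i = x^{i+1}\partial$ for $-1 \leq i \leq n-2$ and bracket $[L_i, L_j] = (j-i)L_{i+j}$. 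The subalgebra $L_{\geq 0} = \sum_{i\geq 0} kL_i$ is solvable of codimension one in $L$ and contains the nilpotent ideal $L_{\geq 1}$; combining this filtration with the classification of simple modular Lie algebras of Cartan type in rank one shows that any simple subalgebra of $L$ is either the $\sl_2$-triple spanned by $L_{-1}, L_0, L_1$ (which is simple only in $\chr(k) \neq 2$) or, when $n = p = \chr(k)$, the full Witt algebra $W$ (simple only for $p > 2$). In either case $\chr(k) = p > 2$, as required. The second main obstacle is making this simple-subalgebra classification rigorous in arbitrary positive characteristic.
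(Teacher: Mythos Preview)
Your overall architecture matches the paper's: build the canonical Lie map $\HH^1(A)\to\HH^1(iAi)$, identify $iAi\cong k[x]/(x^n)$, use simplicity to get injectivity, and then classify simple subalgebras of the target. The two places you flag as obstacles are exactly where the paper does real work, and your sketches for them are either incomplete or subtly wrong.

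\textbf{Non-vanishing of $\rho$.} This is where the \emph{symmetric} hypothesis is used, and your proposal does not use it. The paper (Proposition~4.3) exploits $\soc(A)\cong A/J(A)$ for symmetric $A$ to construct a bimodule map $J(A)/J(A)^2\to\soc(A)$ with image the summand $S\otimes_k S^\vee$; extending by zero on a semisimple complement $E$ gives a derivation of $A$ whose restriction to $iAi$ sends $iJ(A)i$ onto $\soc(iAi)$. That this restriction is not inner on $iAi$ then follows from a cited result on local symmetric algebras. Your idea of producing a derivation restricting to the Euler field $x\partial$ would not in general work without the symmetric hypothesis, and the alternative ``$\rho=0$ forces solvability'' is not obvious either.

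\textbf{Structure of $\Der(k[x]/(x^n))$.} Your basis claim is incorrect: $L_{-1}=\partial$ descends to a derivation of $k[x]/(x^n)$ only when $\chr(k)\mid n$ (since $\partial(x^n)=nx^{n-1}$ must lie in $(x^n)$). Consequently your remark that $\Der(k[x]/(x^2))$ is $2$-dimensional holds only in characteristic $2$. The paper turns this observation into the key reduction: if $\chr(k)=0$ or $p\nmid n$, then $\Der(k[x]/(x^n))=\langle L_0,\dots,L_{n-2}\rangle$ is solvable, contradicting simplicity; hence $\chr(k)=p>0$ and $p\mid n$. One then passes along the surjection $k[x]/(x^n)\to k[x]/(x^p)$, which induces a Lie map $\Der(k[x]/(x^n))\to W$ with nilpotent kernel, so the simple $\HH^1(A)$ embeds in $W=\Der(k[x]/(x^p))$. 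At that point an elementary self-contained argument (Proposition~4.4) classifies simple subalgebras of $W$ for odd $p$; for $p=2$, $W$ is $2$-dimensional hence solvable. This two-step reduction replaces the ``classification of simple modular Lie algebras of Cartan type'' you invoke, and avoids working directly in $\Der(k[x]/(x^n))$ for general $n$.

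\textbf{Minor point.} Your characteristic restriction in the construction of $\rho$ is spurious: from $f=ef+fe$ one gets $efe=0$ in every characteristic (multiply on the right by $e$), so the adjustment $d\mapsto d-[[d(e),e],-]$ always produces a representative with $d(e)=0$.
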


This will be proved in Section \ref{loopSection}, along with some 
general observations regarding the compatibility of Schur functors 
and the Lie algebra structure of $\HH^1(A)$. 
Section 5 contains some examples.

\section{On derivations and the radical}

We start with a brief review of some basic terminology. Let $k$ be a 
field. The {\it nilpotency class} of a nilpotent Lie algebra 
$\CL$ is the smallest positive integer $m$ such that $\CL^m=\{0\}$,
where $\CL^1=$ $\CL'$ and $\CL^{m+1}=$ $[\CL, \CL^m]$ for $m\geq 1$. 
In addition, the {\it derived length} of a solvable Lie algebra is 
the smallest positive integer $n$ such that $\CL^{(n)}=\{0\}$, where 
$\CL^{(1)}=$ $\CL'$ and $\CL^{(n+1)}=$ $[\CL^{(n)}, \CL^{(n)}]$ for 
$n\geq 1$.
A Lie algebra $\CL$ is called {\it strongly solvable} if its derived 
subalgebra is nilpotent. A Lie algebra $\CL$ of finite dimension $n$ 
is called {\it completely solvable} (also called {\it supersolvable}) 
if there exists a sequence of ideals 
$\CL_1 = \CL \supset \CL_2\supset \cdots \supset \CL_n  \supset \{0\}$
such that  $\dim_k(\CL_i)= n+ 1 -i$ for $1\leq i\leq n$.

\begin{Remark} \label{NS}
If $k$ is algebraically closed of characteristic zero, 
then the classes of strongly and completely solvable 
Lie algebras coincide with the class of  solvable Lie algebras as a 
consequence of Lie's theorem. Lie's theorem does not hold
in positive characteristic. If $k$ is algebraically closed of 
prime characteristic $p$, then by \cite[Theorem 3]{Dz}, a 
finite-dimensional Lie algebra $\CL$ over $k$ is strongly solvable 
if and only if $\CL$ is completely solvable.
\end{Remark}

Let $A$ be a finite-dimensional $k$-algebra. We denote by $\ell(A)$ 
the number of isomorphism classes of simple $A$-modules. The {\it 
Loewy length} $\ll(A)$ of $A$ is the smallest positive integer $m$ 
such that $J(A)^m=\{0\}$, where $J(A)$ denotes the Jacobson radical of 
$A$. We denote by $[A,A]$ the $k$-subspace of $A$ generated by the set 
of additive commutators $ab-ba$, where $a$, $b\in$ $A$. A {\it 
derivation on} $A$ is a $k$-linear map $f : A\to$ $A$ satisfying 
$f(ab)=$ $f(a)b+af(b)$ for all $a$, $b\in$ $A$. If $f$, $g$ are 
derivations on $A$, then so is $[f,g]=$ $f\circ g-g\circ f$, and the 
space $\Der(A)$ of derivations on $A$ becomes a Lie algebra in this way. 
If $c\in$ $A$, then the map $[c,-]$ defined by $[c,a]=$ $ca-ac$ is a 
derivation; any derivation of this form is called an {\it inner 
derivation}. 
The space $\IDer(A)$ of inner derivations is a Lie ideal in $\Der(A)$, 
and we have a canonical isomorphism $\HH^1(A)\cong$ $\Der(A)/\IDer(A)$;
see \cite[Lemma 9.2.1]{Weibel}. 
It is easy to see that any derivation on $A$ preserves the subspace 
$[A,A]$, and that any inner derivation of $A$ preserves any ideal in 
$A$. A finite-dimensional $k$-algebra $A$ is called {\it split} if
$\End_A(S)\cong$ $k$ for every simple $A$-module $S$.
If $A$ is split, then by the Wedderburn-Malcev Theorem, $A$ has a
separable subalgebra $E$ such that $A=$ $E\oplus J(A)$. Moreover, $E$
is unique up to conjugation by elements in the group $A^\times$ of
invertible elements in $A$. A primitive decomposition $I$ of $1$ in $E$
remains a primitive decomposition of $1$ in $A$.

For convenience, we mention the following well-known descriptions of
certain $\Ext^1$-spaces.

\begin{Lemma} \label{Ext1-Lemma1}
Let $A$ be a split finite-dimensional $k$-algebra,
let $i$ be a primitive idempotent in $A$. Set $S=$ $Ai/J(A)i$ and 
$S^\vee=$ $iA/iJ(A)$. We have $k$-linear isomorphisms 
$$\HH^1(A;S\tenk S^\vee) \cong \Ext^1_A(S,S)\cong
\Hom_A(J(A)i/J(A)^2i, S) \cong
\Hom_{A\tenk A^\op}(J(A)/J(A)^2, S\tenk S^\vee)\ .$$
\end{Lemma}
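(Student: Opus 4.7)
I would prove the three isomorphisms one at a time, each by a standard piece of homological algebra. For the first, I note two things. Since $S$ is finite-dimensional, the map $s\ten \sigma \mapsto (s'\mapsto \sigma(s')\,s)$ is an $A$-bimodule isomorphism $S\tenk S^\vee \cong \Hom_k(S,S)$; here the pairing $S^\vee\tenk S\to k$ arises from the multiplication $iA\tenk Ai \to iAi \to iAi/iJ(A)i\cong k$, using that $A$ is split. Second, one has the standard identification $\HH^n(A;\Hom_k(M,N))\cong \Ext^n_A(M,N)$ for left $A$-modules $M,N$, obtained by applying $\Hom_A(-,N)$ to the bar resolution of $M$ and recognising the result as the Hochschild cochain complex of $A$ with coefficients in $\Hom_k(M,N)$.

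For the second isomorphism, I apply $\Hom_A(-,S)$ to the projective presentation $0\to J(A)i\to Ai\to S\to 0$. Since $A$ is split, $\Hom_A(S,S)\cong k$ and $\Hom_A(Ai,S)\cong iS\cong iAi/iJ(A)i\cong k$, and the restriction map between them is an isomorphism. Since $J(A)\cdot S=\{0\}$, every $A$-homomorphism $J(A)i\to S$ vanishes on $J(A)^2i$, so $\Hom_A(J(A)i,S)\cong \Hom_A(J(A)i/J(A)^2i,S)$, and the resulting long exact sequence identifies $\Ext^1_A(S,S)$ with this space.

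For the third isomorphism, I use the adjunction
$$\Hom_{A\tenk A^\op}(M, S\tenk S^\vee)\cong \Hom_A(M\tenA S, S),$$
valid for any $A$-bimodule $M$; this follows from the bimodule isomorphism above together with the standard tensor--hom adjunction $\Hom_{A\tenk A^\op}(M,\Hom_k(S,S))\cong \Hom_A(M\tenA S,S)$. It then remains to compute $(J(A)/J(A)^2)\tenA S$: tensoring the projective presentation of $S$ on the left by $J(A)/J(A)^2$ yields a right-exact sequence whose middle term is $(J(A)/J(A)^2)\cdot i = J(A)i/J(A)^2i$ and whose leftmost map has image in $J(A)\cdot (J(A)i/J(A)^2i)=\{0\}$, so $(J(A)/J(A)^2)\tenA S\cong J(A)i/J(A)^2i$. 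None of these steps is a real obstacle; the main thing to get right is the left/right $A$-action bookkeeping in the adjunctions and in the idempotent identification $M\tenA Ai\cong Mi$.
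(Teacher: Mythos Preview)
The paper does not actually prove this lemma: it is introduced with the sentence ``For convenience, we mention the following well-known descriptions of certain $\Ext^1$-spaces'' and stated without proof. Your argument is correct and supplies precisely the standard justifications one would expect --- the identification $S\tenk S^\vee\cong\Hom_k(S,S)$ together with the bar-resolution isomorphism $\HH^n(A;\Hom_k(M,N))\cong\Ext^n_A(M,N)$ for the first isomorphism, the long exact sequence attached to $0\to J(A)i\to Ai\to S\to 0$ for the second, and tensor--hom adjunction for the third. The bookkeeping you flag (that $\Hom_A(S,S)\to\Hom_A(Ai,S)$ is an isomorphism because both sides are one-dimensional over $k$, that $(J(A)/J(A)^2)\tenA Ai\cong J(A)i/J(A)^2i$, and that the image of $(J(A)/J(A)^2)\tenA J(A)i$ vanishes) is handled correctly.
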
 

\begin{Lemma} \label{Ext1-Lemma2}
Let $A$ be a split finite-dimensional $k$-algebra.
Let $i$ be a primitive idempotent in $A$, and set $S=$ $Ai/J(A)i$.
We have $\Ext^1_A(S,S)=$ $\{0\}$ if and only if $iJ(A)i\subseteq$ 
$J(A)^2$. 
\end{Lemma}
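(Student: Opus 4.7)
The plan is to combine Lemma \ref{Ext1-Lemma1} with a dimension count that translates the vanishing of $\Ext^1_A(S,S)$ into the intrinsic condition $iJ(A)i \subseteq J(A)^2$. Specifically, I would start from the isomorphism
$$\Ext^1_A(S,S) \cong \Hom_A(J(A)i/J(A)^2i, S)$$
provided by Lemma \ref{Ext1-Lemma1}. The module $M = J(A)i/J(A)^2i$ is annihilated by $J(A)$, hence semisimple, so $\Ext^1_A(S,S)$ vanishes precisely when $S$ does not occur as a composition factor of $M$.

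Next, I would exploit the hypothesis that $A$ is split. Since $\End_A(S) \cong k$, the multiplicity of $S$ in any semisimple $A$-module $N$ is equal to $\dim_k \Hom_A(N,S)$, and also to $\dim_k iN$ via the natural isomorphism $\Hom_A(Ai, N) \cong iN$. Applying this to $N = M$ and computing carefully, one has
$$i\bigl(J(A)i/J(A)^2i\bigr) \;=\; iJ(A)i\,/\,\bigl(iJ(A)i \cap J(A)^2i\bigr) \;=\; iJ(A)i\,/\,iJ(A)^2i,$$
where the last equality uses the observation that any element of $iJ(A)i \cap J(A)^2i$ is fixed by left multiplication by $i$, hence lies in $iJ(A)^2i$. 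Thus $\Ext^1_A(S,S) = \{0\}$ if and only if $iJ(A)i = iJ(A)^2i$.

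Finally, I would verify that the latter equality is equivalent to the containment $iJ(A)i \subseteq J(A)^2$. One direction is immediate from $iJ(A)^2i \subseteq J(A)^2$. For the converse, if $iJ(A)i \subseteq J(A)^2$, then multiplying on both sides by the idempotent $i$ gives $iJ(A)i = i(iJ(A)i)i \subseteq iJ(A)^2i$, and the reverse inclusion is trivial. This chain of equivalences yields the Lemma.

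The only delicate point is the identification $i\bigl(J(A)i/J(A)^2i\bigr) = iJ(A)i/iJ(A)^2i$, which rests on the idempotent fixing argument; everything else is a routine consequence of Lemma \ref{Ext1-Lemma1} and the splitness of $A$.
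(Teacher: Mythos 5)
Your proof is correct. It follows essentially the same strategy as the paper's: both reduce the statement to Lemma \ref{Ext1-Lemma1} and then observe that multiplying by $i$ detects whether the relevant simple constituent occurs in the radical layer. The only real difference is which isomorphism from Lemma \ref{Ext1-Lemma1} is invoked: the paper uses the bimodule description $\Ext^1_A(S,S)\cong \Hom_{A\tenk A^\op}(J(A)/J(A)^2, S\tenk S^\vee)$, so that vanishing is immediately equivalent to $i\cdot(J(A)/J(A)^2)\cdot i=\{0\}$, which is literally the containment $iJ(A)i\subseteq J(A)^2$ with no further work. You instead use the one-sided description $\Hom_A(J(A)i/J(A)^2i, S)$, which forces you to compute $i\bigl(J(A)i/J(A)^2i\bigr)\cong iJ(A)i/iJ(A)^2i$ via the intersection identity $iJ(A)i\cap J(A)^2i=iJ(A)^2i$, and then to pass from $iJ(A)i=iJ(A)^2i$ to $iJ(A)i\subseteq J(A)^2$; both of these extra steps are carried out correctly (the idempotent-fixing argument for the intersection, and the two-sided truncation $iJ(A)i=i(iJ(A)i)i\subseteq iJ(A)^2i$ for the converse). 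In short: same idea, marginally longer route; the bimodule formulation buys you the final equivalence for free.
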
 

\begin{proof}
By Lemma \ref{Ext1-Lemma1}, we have $\Ext^1_A(S,S)=$ $\{0\}$ if and 
only if $J(A)/J(A)^2$ has no simple bimodule summand isomorphic to
$S\tenk S^\vee$. This is equivalent to $i\cdot(J(A)/J(A)^2)\cdot i=$
$\{0\}$, hence to  $iJ(A)i\subseteq$ $J(A)^2$ as stated. 
\end{proof}

\begin{Lemma} \label{Ezero}
Let $A$ be a split finite-dimensional $k$-algebra, and let $E$ be
a separable subalgebra of $A$ such that $A=$ $E\oplus J(A)$. 
Every class in $\HH^1(A)$ has a representative $f\in$ $\Der(A)$
satisfying $E\subseteq$ $\ker(f)$.
\end{Lemma}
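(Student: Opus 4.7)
The plan is to start with an arbitrary representative $f \in \Der(A)$ of a given class in $\HH^1(A)$ and subtract from it an inner derivation so that the resulting derivation vanishes on $E$. The only nontrivial ingredient is the vanishing $\HH^1(E, M) = \{0\}$ for every $E$-bimodule $M$; this is a standard consequence of separability, since $E$ separable means $E$ is projective as a module over $E \tenk E^{\op}$, and hence all higher Hochschild cohomology of $E$ with bimodule coefficients vanishes. In particular, every derivation from $E$ to an $E$-bimodule $M$ is inner, i.e.\ of the form $e \mapsto cm - mc$ for some $m \in M$ (with the obvious interpretation when viewed as a map from $E$).

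First I would note that the restriction $f|_E \colon E \to A$ is a derivation of $E$ with values in the $E$-bimodule $A$, where $A$ is regarded as an $E$-bimodule via the inclusion $E \subseteq A$. Applying the vanishing above to the bimodule $M = A$, there exists $c \in A$ such that $f(e) = ce - ec$ for all $e \in E$. Now the map $[c, -] \colon A \to A$ sending $a \mapsto ca - ac$ is an inner derivation of $A$, so $g := f - [c, -]$ is a derivation of $A$ representing the same class as $f$ in $\HH^1(A) = \Der(A)/\IDer(A)$. By construction $g(e) = 0$ for every $e \in E$, i.e.\ $E \subseteq \ker(g)$, which gives the required representative.

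I do not expect any genuine obstacle: the argument reduces entirely to the standard vanishing $\HH^1(E, -) = \{0\}$ coming from separability, together with two purely formal observations — that the restriction of a derivation of $A$ to a subalgebra is a derivation with bimodule-valued coefficients, and that the inner derivation $[c,-]$ of $A$ restricts on $E$ to the inner derivation witnessing that $f|_E$ is inner. The only small point worth being careful about is that $c$ is chosen in $A$ (not in $E$), so that $[c,-]$ is genuinely an inner derivation of $A$ and can legitimately be subtracted to modify the class of $f$ in $\HH^1(A)$.
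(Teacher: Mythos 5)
Your proposal is correct and follows essentially the same route as the paper: restrict $f$ to $E$, use separability of $E$ to conclude $\HH^1(E;A)=\{0\}$ so that $f|_E$ is inner, given by some $c\in A$, and subtract the inner derivation $[c,-]$ of $A$. The point you flag at the end --- that $c$ lies in $A$ rather than in $E$, so that $[c,-]$ is an inner derivation of $A$ --- is exactly the (implicit) content of the paper's argument as well.
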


\begin{proof}
Let $f : A\to$ $A$ be a derivation. 
Since $E$ is separable, it follows that for any $E$-$E$-bimodule
$M$ we have $\HH^1(E; M)=$ $\{0\}$. In particular, the derivation 
$f|_E : E\to$ $A$ is inner; that is, there is an element $c\in$ $A$ 
such that $f(x)=$ $[c,x]$ for all $x\in$ $E$. Thus the derivation 
$f - [c,-]$ on $A$ vanishes on $E$ and represents the same class as 
$f$ in $HH^1(A)$. 
\end{proof}

\begin{Lemma} \label{iAjLemma}
Let $A$ be a split finite-dimensional $k$-algebra, and let $E$ be
a separable subalgebra of $A$ such that $A=$ $E\oplus J(A)$. 
Let $f : A\to$ $A$ be a derivation such that $E\subseteq$ $\ker(f)$.
For any two idempotents $i$, $j$ in $E$ we have $f(iAj)\subseteq$
$iAj$ and $f(AiAj)\subseteq$ $AiAj$.
\end{Lemma}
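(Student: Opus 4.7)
The plan is to apply the Leibniz rule directly, exploiting the fact that $E \subseteq \ker(f)$ forces $f(i) = f(j) = 0$ since $i, j \in E$. Recall that iterating the defining identity $f(ab) = f(a)b + af(b)$ gives, for any $a_1, \dots, a_n \in A$,
$$f(a_1 a_2 \cdots a_n) = \sum_{\ell=1}^n a_1 \cdots a_{\ell-1}\, f(a_\ell)\, a_{\ell+1} \cdots a_n.$$

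For the first inclusion, I would take a typical element of $iAj$, which has the form $iaj$ with $a \in A$, and compute
$$f(iaj) = f(i)\cdot aj + i\cdot f(a)\cdot j + ia\cdot f(j) = i\,f(a)\,j,$$
which lies in $iAj$. Extending by $k$-linearity yields $f(iAj) \subseteq iAj$.

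For the second inclusion, observe that $AiAj$ is the $k$-span of elements of the form $aibj$ with $a, b \in A$. Applying Leibniz to this four-fold product gives
$$f(aibj) = f(a)ibj + a\,f(i)\,bj + ai\,f(b)\,j + aib\,f(j) = f(a)ibj + ai\,f(b)\,j,$$
and both summands lie in $AiAj$. Linearity then yields $f(AiAj) \subseteq AiAj$.

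The main obstacle: there really is none. The statement is an immediate consequence of the Leibniz rule together with the hypothesis $f(E) = \{0\}$; the only care needed is to identify correct spanning sets for the subspaces $iAj$ and $AiAj$ so that Leibniz can be applied factor by factor.
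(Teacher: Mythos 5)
Your proof is correct and follows essentially the same route as the paper: both arguments are direct applications of the Leibniz rule using $f(i)=f(j)=0$, the only cosmetic difference being that the paper writes $iaj=i(iaj)$ and $biaj=b(iaj)$ to apply the two-factor rule, whereas you apply the iterated rule to the products $i\cdot a\cdot j$ and $a\cdot i\cdot b\cdot j$. Your identification of the spanning sets (every element of $iAj$ equals $iaj$ for some $a$, and $AiAj$ is spanned by elements $aibj$) is accurate, so nothing is missing.
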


\begin{proof}
Let $i$, $j$ be idempotents in $E$, and let $a$, $b\in$ $A$. We have
$f(iaj)=$ $f(i^2aj)=$ $if(iaj)+$ $f(i)iaj=$ $if(iaj)$, since
$i\in$ $E\subseteq$ $\ker(f)$. Thus $f(iaj)\in$ $iA$. A similar
argument shows that $f(iaj)\in$ $Aj$, and hence $f(iaj)\in$ $iAj$. 
This shows the first statement. The second statement follows from this
and the equality $f(biaj)=$ $f(b)iaj+bf(iaj)$. 
\end{proof}

\begin{Lemma} \label{Ext1zero}
Let $A$ be a split finite-dimensional $k$-algebra such that
$\Ext^1_A(S,S)=\{0\}$ for all simple $A$-modules $S$. Then for any
derivation $f : A\to$ $A$ we have $f(J(A)) \subseteq$ $J(A)$. 
\end{Lemma}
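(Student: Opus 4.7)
The plan is to use Lemma \ref{Ezero} to reduce to a derivation vanishing on a Wedderburn--Malcev complement, then combine Lemmas \ref{Ext1-Lemma1} and \ref{iAjLemma} with the $\Ext^1$-hypothesis. Given $f\in\Der(A)$, fix a separable subalgebra $E\subseteq A$ with $A=E\oplus J(A)$. By Lemma \ref{Ezero} one can write $f=g+[c,-]$ for some $c\in A$ and some derivation $g$ vanishing on $E$. The inner derivation $[c,-]$ preserves the ideal $J(A)$, so it is enough to prove $g(J(A))\subseteq J(A)$.

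First note, by the Leibniz rule, that $g(J(A)^2)\subseteq g(J(A))\cdot J(A)+J(A)\cdot g(J(A))\subseteq J(A)$, irrespective of whether $g(J(A))\subseteq J(A)$. Fix a primitive decomposition $I$ of $1$ in $E$, which is also a primitive decomposition of $1$ in $A$, and decompose $J(A)=\bigoplus_{i,j\in I}iJ(A)j$. By Lemma \ref{iAjLemma}, $g(iJ(A)j)\subseteq iAj$ for all $i,j\in I$. Set $S_i=Ai/J(A)i$. If $S_i\not\cong S_j$, then $iEj=\{0\}$ and hence $iAj=iJ(A)j\subseteq J(A)$, so $g(iJ(A)j)\subseteq J(A)$. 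If instead $S_i\cong S_j\cong S$, then the hypothesis $\Ext^1_A(S,S)=\{0\}$ together with Lemma \ref{Ext1-Lemma1} implies that the $S\tenk S^\vee$-isotypic component of the semisimple $E$-$E$-bimodule $J(A)/J(A)^2$ vanishes; since this component contains $i(J(A)/J(A)^2)j$, one concludes $iJ(A)j\subseteq J(A)^2$, and thus $g(iJ(A)j)\subseteq g(J(A)^2)\subseteq J(A)$. Summing over $(i,j)\in I\times I$ gives $g(J(A))\subseteq J(A)$.

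The only point requiring genuine care is the off-diagonal case $i\neq j$ with $S_i\cong S_j$: Lemma \ref{Ext1-Lemma2} as stated covers only $i=j$, so one has to unpack Lemma \ref{Ext1-Lemma1} at the level of $E$-$E$-bimodules in order to extend the conclusion $iJ(A)j\subseteq J(A)^2$ to all pairs of primitive idempotents in the same Wedderburn block of $E$. Once this off-diagonal refinement is in hand, the remainder of the argument is routine bookkeeping using the decomposition $J(A)=\bigoplus_{i,j} iJ(A)j$.
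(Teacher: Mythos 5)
Your proof is correct, and it shares the paper's overall strategy: reduce via Lemma \ref{Ezero} to a derivation $g$ vanishing on $E$, use Lemma \ref{iAjLemma} to see that $g$ preserves each summand $iAj$, and exploit the $\Ext^1$-hypothesis to get $iJ(A)j\subseteq J(A)^2$ when $i$ and $j$ lie in the same Wedderburn block. Your ``off-diagonal refinement'' is exactly what the paper asserts without proof, and it does hold --- either by your isotypic-component argument, or more quickly by conjugation: if $j=uiu^{-1}$ for a unit $u$, then $iJ(A)j=iJ(A)uiu^{-1}\subseteq iJ(A)iu^{-1}\subseteq J(A)^2$. Where you genuinely diverge is in the endgame. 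The paper feeds the inclusion $iJ(A)j\subseteq J(A)^2$ (for conjugate $i,j$) into Nakayama's Lemma to rewrite $J(A)i=\sum_{j}AjAi$ with $j$ running over the idempotents \emph{not} conjugate to $i$, and then concludes from $g(AjAi)\subseteq AjAi\subseteq J(A)$. You instead observe that $g(J(A)^2)\subseteq J(A)$ holds unconditionally for any derivation by the Leibniz rule (each of $g(a)b$ and $ag(b)$ lands in the ideal $J(A)$ when $a,b\in J(A)$), which disposes of the conjugate pairs directly and makes the Nakayama step unnecessary. Your route is slightly more economical; the paper's route yields as a byproduct the finer inclusion $f(J(A)i)\subseteq J(A)i$, though your argument recovers this too if one intersects with Lemma \ref{iAjLemma}.
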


\begin{proof}
Let $E$ be a separable subalgebra of $A$ such that $A=$ $E\oplus J(A)$.
Let $I$ be a primitive decomposition of $1$ in $E$ (hence also in $A$). 
Note that if $i$, $j\in$ $I$ are not conjugate in $A^\times$, then 
$iAj\subseteq$ $J(A)$. The hypotheses on $A$ imply that $J(A)i/J(A)^2i$ 
has no summand isomorphic to $Ai/J(A)i$, and hence that 
$iJ(A)i\subseteq$ $J(A)^2$ for any $i\in$ $I$. Then $iJ(A)j\subseteq$ 
$J(A)^2$ for any two $i$, $j\in$ $I$ which are conjugate in $A^\times$. 
Let now $f : A\to$ $A$ be a derivation. As noted above, any inner 
derivation preserves $J(A)$. Thus, by Lemma \ref{Ezero}, we may assume 
that $f|_E=0$. Since $J(A)=$ $\oplus_{i\in I}\ J(A)i$, it suffices to 
show that $f(J(A)i)\subseteq$ $J(A)i$, where $i\in$ $I$. If $j$ is 
conjugate to $i$, then $AjJ(A)i\subseteq$ $J(A)^2i$. Since $J(A)i=$ 
$\sum_{j\in I}\ AjJ(A)i$, it follows from Nakayama's Lemma that 
$J(A)i=$ $\sum_{j} AjAi$, where $j$ runs over the subset $I'$ of all 
$j$ in $I$ which are not conjugate to $i$. Now $f$ preserves 
the submodules $AjAi$ in this sum, thanks to Lemma \ref{iAjLemma}.
The result follows. 
\end{proof}

The following observations are variations of the statements in
\cite[Proposition 3.5]{LiRu1}.

\begin{Proposition} \label{rad-Prop}
Let $A$ be a split finite-dimensional $k$-algebra, and let $E$ be
a separable subalgebra of $A$ such that $A=$ $E\oplus J(A)$. 
For $m\geq 1$, denote by $D_m$ the subspace of $\Der(A)$ consisting
of all derivations $f : A\to$ $A$ such that $E\subseteq$ $\ker(f)$ and
such that $f(J(A))\subseteq$ $J(A)^m$. The following hold.

\begin{enumerate}
\item[\rm (i)]
For any positive integers $m$, $n$ we have $[D_m, D_n]\subseteq$
$D_{m+n-1}$. 

\item[\rm (ii)]
The space $D_1$ is a Lie subalgebra of $\Der(A)$, and for any positive 
integer $m$, the space $D_m$ is a Lie ideal in $D_1$. 

\item[\rm (iii)]
The space $D_2$ is a nilpotent ideal in $D_1$. More precisely, if
$\ll(A)\leq 2$, then $D_2=$ $\{0\}$, and if $\ll(A)>2$, then the
nilpotency class of $D_2$ is at most $\ll(A)-2$. 
\end{enumerate}
\end{Proposition}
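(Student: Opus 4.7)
The plan is to treat (i) as the one real computation and to derive (ii) and (iii) quickly from it by specialisation and iteration of the index bound. The underlying intuition is that a derivation in $D_m$ not only sends $J(A)$ into $J(A)^m$, but, via the Leibniz rule, shifts the radical filtration down by $m-1$ on arbitrary products; the commutator of two such maps then shifts by $(m-1)+(n-1)=m+n-2$, placing it in $D_{m+n-1}$.

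For (i), I would fix $f\in D_m$ and $g\in D_n$. Both are $k$-linear and vanish on $E$, hence so does $[f,g]$; and $[f,g]$ is automatically a derivation. To control $[f,g]$ on $J(A)$, take $x\in J(A)$ and write $g(x)\in J(A)^n$ as a sum of products $y_1y_2\cdots y_n$ with each $y_i\in J(A)$. Applying $f$ and expanding by Leibniz yields a sum of terms of the form $y_1\cdots y_{j-1}\,f(y_j)\,y_{j+1}\cdots y_n$ with $f(y_j)\in J(A)^m$ and the other $n-1$ factors in $J(A)$, so each term lies in $J(A)^{m+n-1}$. The argument for $g(f(x))$ is symmetric, and (i) follows.

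Part (ii) is then immediate: taking $m=n=1$ in (i) gives $[D_1,D_1]\subseteq D_1$, so $D_1$ is a Lie subalgebra of $\Der(A)$; taking $m=1$ with general $n$ gives $[D_1,D_n]\subseteq D_n$, so each $D_n$ is a Lie ideal in $D_1$. For (iii), if $\ll(A)\leq 2$ then $J(A)^2=\{0\}$, and any $f\in D_2$ kills both $E$ and $J(A)$, hence $f=0$. If $\ll(A)>2$, I would iterate (i) along the lower central series of $D_2$ in the paper's convention: $D_2^{1}=[D_2,D_2]\subseteq D_3$, and inductively $D_2^{k}=[D_2,D_2^{k-1}]\subseteq [D_2,D_{k+1}]\subseteq D_{k+2}$. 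As soon as $k+2\geq\ll(A)$, any element of $D_{k+2}$ kills $E$ and $J(A)$, so $D_{k+2}=\{0\}$; hence $D_2^{\ll(A)-2}=\{0\}$, bounding the nilpotency class by $\ll(A)-2$.

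The one point requiring care is the Leibniz step in (i): one uses that $f\in D_m$ with $m\geq 1$ maps $J(A)$ into $J(A)^m\subseteq J(A)$, so that iterated applications of $f$ and $g$ to a product of elements of $J(A)$ really do stay in the radical filtration and each expansion term lands in $J(A)^{m+n-1}$. This is harmless but is the one place where the full strength of the definition of $D_m$ (the radical bound together with $f|_E=0$) is genuinely in play.
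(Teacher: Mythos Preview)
Your proof is correct and follows essentially the same route as the paper. The paper offloads the key computation in (i) to \cite[Lemma 3.4]{LiRu1}, whereas you have written out the Leibniz-rule argument directly (which is precisely what that cited lemma amounts to); your derivations of (ii) and (iii) from (i) then mirror the paper's exactly.
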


\begin{proof}
The space of derivations on $A$ which vanish on $E$ is easily seen to
be closed under the Lie bracket on $\Der(A)$. Thus statement (i) 
follows from \cite[Lemma 3.4]{LiRu1}.  Statement (ii) is an immediate
consequence of (i). If $m\geq$ $\ll(A)$, then $J(A)^m=$ $\{0\}$, and
hence $D_m=$ $\{0\}$. Together with (i), this implies (iii). 
\end{proof}

\begin{Proposition} \label{AtoAJA2}
Let $A$ be a split finite-dimensional $k$-algebra, and let $E$ be a 
separable subalgebra of $A$ such that $A=$ $E\oplus J(A)$. For 
$m\geq 1$, denote by $D_m$ the subspace of $\Der(A)$ consisting of all 
derivations $f : A\to$ $A$ such that $E\subseteq$ $\ker(f)$ and such 
that $f(J(A))\subseteq$ $J(A)^m$. Suppose that every derivation $f$ on 
$A$ satisfies $f(J(A))\subseteq$ $J(A)$. Then the canonical algebra 
homomorphism $A\to$ $A/J(A)^2$ induces a Lie algebra homomorphism 
$\Phi : \HH^1(A)\to$ $\HH^1(A/J(A)^2)$, and the following hold. 

\begin{enumerate}
\item[\rm (i)] 
The canonical surjection $\Der(A)\to$ $\HH^1(A)$ maps $D_1$ onto 
$\HH^1(A)$. 

\item[\rm (ii)] 
The canonical surjection $\Der(A)\to$ $\HH^1(A)$ maps $D_2$ onto 
$\ker(\Phi)$; in particular, $\ker(\Phi)$ is a nilpotent ideal in 
the Lie algebra $\HH^1(A)$.

\item[\rm (iii)] 
The Lie algebra $\HH^1(A)$ is solvable if and only if
$\HH^1(A)/\ker(\Phi)$ is solvable.

\item[\rm (iv)] 
If the derived Lie algebra of $\HH^1(A)$ is contained in $\ker(\Phi)$,
then $\HH^1(A)$ is nilpotent. 

\item[\rm (v)]
If the Lie algebra $\HH^1(A)$ is simple, then $\Phi$ is injective.
\end{enumerate}
\end{Proposition}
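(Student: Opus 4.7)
The plan is to first verify that $\Phi$ is well defined, then address (i)--(v) in turn using the derivation filtration $D_1 \supseteq D_2 \supseteq \cdots$ from Proposition \ref{rad-Prop}.

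For the existence of $\Phi$: by hypothesis every derivation $f$ on $A$ satisfies $f(J(A)) \subseteq J(A)$, and the Leibniz rule then gives $f(J(A)^2) \subseteq J(A)^2$, so $f$ descends to a derivation on $A/J(A)^2$; inner derivations by $c \in A$ descend to inner derivations by the image of $c$. This yields the Lie algebra homomorphism $\Phi$.

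For (i) I would apply Lemma \ref{Ezero} to obtain a representative $f$ of any class of $\HH^1(A)$ with $E \subseteq \ker(f)$; the standing hypothesis $f(J(A)) \subseteq J(A)$ then places $f$ in $D_1$. For (ii) the inclusion of the image of $D_2$ into $\ker(\Phi)$ is immediate, since $f \in D_2$ kills both $E$ and $J(A)$ modulo $J(A)^2$. For the reverse inclusion I take $f \in D_1$ with $\Phi([f]) = 0$; then $\bar f$ is an inner derivation on $A/J(A)^2$, say $\bar f = [\bar c, -]$. I would decompose $\bar c = \bar c_E + \bar c_J$ along the splitting $A/J(A)^2 = E \oplus (J(A)/J(A)^2)$, lift to $c_E + c_J \in A$, and set $g = f - [c_E + c_J, -]$; this gives $g(A) \subseteq J(A)^2$. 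A second application of Lemma \ref{Ezero}, strengthened via separability of $E$, yields $c'' \in J(A)^2$ with $g|_E = [c'', -]|_E$ (using $\HH^1(E; J(A)^2) = 0$); then $g' = g - [c'', -]$ vanishes on $E$ and sends $J(A)$ into $J(A)^2$ (since $[c'', J(A)] \subseteq J(A)^3$), so $g' \in D_2$ with $[g'] = [f]$. Nilpotency of $\ker(\Phi)$ follows because it is the image under $\Der(A) \to \HH^1(A)$ of the nilpotent Lie algebra $D_2$ (Proposition \ref{rad-Prop}(iii)).

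Parts (iii)--(v) then fall out. For (iii) an extension of a solvable Lie algebra by a nilpotent ideal is solvable, and quotients preserve solvability. For (iv) the derived algebra of $\HH^1(A)$ sits in the nilpotent ideal $\ker(\Phi)$, hence is itself nilpotent as a Lie subalgebra of a nilpotent Lie algebra, yielding the conclusion. For (v) the ideal $\ker(\Phi)$ is either $0$ or all of $\HH^1(A)$ by simplicity, and the latter is excluded because $\ker(\Phi)$ is nilpotent whereas a simple Lie algebra is not. I expect the main obstacle to be the lift-and-correct construction in (ii): ensuring that the inner derivation realizing $\bar f$ can be corrected twice so that the resulting cohomologous representative simultaneously vanishes on $E$ and sends $J(A)$ into $J(A)^2$, not just one of these; the refinement of Lemma \ref{Ezero} to place $c''$ in $J(A)^2$ (rather than merely in $A$) is what makes this work.
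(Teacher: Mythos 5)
Your construction of $\Phi$, your proof of (i), and your surjectivity argument for (ii) are correct, and (ii) is where your route differs mildly from the paper's. The paper lifts $\bar c$ in one step to an element $c$ of the centraliser $C_A(E)$, using that separability of $E$ makes the functor $\Hom_{E\tenk E^{\op}}(E,-)$ exact, so that $C_A(E)\to C_{A/J(A)^2}(E)$ is onto and the single correction $f-[c,-]$ already lies in $D_2$. You instead take an arbitrary lift and then perform a second inner correction by some $c''\in J(A)^2$, using $\HH^1(E;J(A)^2)=\{0\}$. Both corrections rest on the same separability of $E$, so the arguments are essentially interchangeable; your version trades the exactness statement for a second application of the vanishing of $\HH^1(E;-)$, and it is sound. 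Your proofs of (iii) and (v) are fine.

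Part (iv) contains a genuine gap. You show that the derived subalgebra of $\HH^1(A)$ is nilpotent (being contained in the nilpotent ideal $\ker(\Phi)$) and then assert that this ``yields the conclusion'' that $\HH^1(A)$ is nilpotent. That implication is false in general: a Lie algebra whose derived subalgebra is nilpotent is strongly solvable, not nilpotent --- the two-dimensional non-abelian Lie algebra already has abelian derived subalgebra. In fact statement (iv) as printed fails: for $A=k[x]/(x^3)$ with $\chr(k)\neq 3$, every derivation preserves $J(A)$, the algebra $A$ is commutative so $\HH^1(A)=\Der(A)$ is the two-dimensional non-abelian Lie algebra spanned by $x\mapsto x$ and $x\mapsto x^2$, and $\ker(\Phi)$ is spanned by $x\mapsto x^2$, which is exactly the derived subalgebra; the hypothesis of (iv) holds but $\HH^1(A)$ is not nilpotent. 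The paper gives no argument either (it declares (iii)--(v) ``immediate consequences of (ii)''), and where this is applied (Theorem \ref{Ext1-1dim-ll}) only the nilpotency of $\CL'$, i.e.\ strong (hence, for $k$ algebraically closed, complete) solvability of $\CL$, is actually used. So the correct conclusion in (iv) should be that $\HH^1(A)$ is strongly solvable, and with that reading your argument for (iv) is complete; as stated, neither your argument nor any other can close the gap.
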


\begin{proof}
The hypotheses on $\Der(A)$ together with Lemma \ref{Ezero} imply that 
$\HH^1(A)$ is equal to the image of the space $D_1$ in $\HH^1(A)$, 
whence (i). The canonical surjection $\Der(A)\to$ $\HH^1(A)$ clearly 
maps $D_2$ to $\ker(\Phi)$; we need to show the surjectivity of the 
induced map $D_2\to$ $\ker(\Phi)$. Note first that any inner derivation 
in $D_1$ is of the form $[c, -]$ for some $c$ which centralises $E$. 
Note further that the centraliser $C_A(E)$ of $E$ in $A$ is canonically 
isomorphic to $\Hom_{E\tenk E^\op}(E,A)$ (via the map sending an 
$E$-$E$-bimodule homomorphism $\alpha : E\to$ $A$ to $\alpha(1)$). 
Since $E$ is separable, hence projective as an $E$-$E$-bimodule, it 
follows that the functor $\Hom_{E\tenk E^\op}(E,-)$ is exact. In 
particular, the surjection $A\to$ $A/J(A)^2$ induces a surjection 
$C_A(E)\to$ $C_{A/J(A)^2}(E)$, where we identify $E$ with its image in 
$A/J(A)^2$. Let $f\in$ $D_1$ such that the class of $f$ is in 
$\ker(\Phi)$, or equivalently, such that the induced derivation, 
denoted $\bar f$, on $A/J(A)^2$ is inner. Then there is $c\in$ $A$ such 
that $\bar f=$ $[\bar c, -]$, where $\bar c=$ $c+J(A)^2$ centralises 
the image of $E$ in $A/J(A)^2$. By the above, we may choose $c$ such 
that $c$ centralises $E$ in $A$. Then the derivation $f-[c,-]$ 
represents the same class as $f$, still belongs to $D_1$, and induces 
the zero map on $A/J(A)^2$. Thus $f-[c,-]$ belongs in fact to $D_2$, 
proving (ii). The remaining statements are immediate consequences
of (ii).
\end{proof}

The next result includes the special case of Theorem \ref{Ext1-1dim-thm}
where $\ll(A)\leq 2$. 

\begin{Proposition} \label{Ext1-1dim}
Let $A$ be a split finite-dimensional $k$-algebra such that $J(A)^2=$
$\{0\}$. Suppose that for every simple $A$-module $S$ we have 
$\Ext^1_A(S,S)=$ $\{0\}$ and that for any two simple $A$-modules $S$, 
$T$ we have $\dim_k(\Ext_A^1(S,T))\leq 1$. Let $E$ be a separable
subalgebra of $A$ such that $A=$ $E\oplus J(A)$. The following hold

\begin{enumerate}
\item[{\rm (i)}]
If $A$ is basic and if $f$, $g$ are derivations on $A$ which vanish on 
$E$, then $[f,g]=$ $0$. 

\item[{\rm (ii)}]
The Lie algebra $\HH^1(A)$ is abelian.

\item[{\rm (iii)}]
Let $e(A)$ be the number of edges in the quiver of $A$. We have
$$\dim_k(\HH^1(A)) = e(A)-\ell(A) + 1\leq (\ell(A)-1)^2\ .$$ 

\end{enumerate}
\end{Proposition}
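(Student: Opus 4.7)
My approach is to reduce to the basic case via Morita invariance of $\HH^1(A)$ as a Lie algebra and of the Ext-quiver. Let $I$ be a primitive decomposition of $1$ in $E$; since $A$ is basic these idempotents are pairwise non-conjugate in $A^\times$, so $iEj = \{0\}$ and $iAj = iJ(A)j$ for distinct $i, j \in I$. Combining the hypothesis $\Ext_A^1(S,S) = \{0\}$ with Lemma \ref{Ext1-Lemma2} and $J(A)^2 = \{0\}$ forces $iJ(A)i = \{0\}$ and hence $iAi = ki$; and $\dim_k \Ext_A^1(S,T) \leq 1$ translates into $\dim_k iJ(A)j \leq 1$ for $i \neq j$. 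These two facts drive the entire argument.

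For (i), let $f, g \in \Der(A)$ vanish on $E$. By Lemma \ref{iAjLemma} both preserve every summand of the decomposition $A = \bigoplus_{i,j \in I} iAj$. On $iAi = ki$ they vanish because $f(i) = g(i) = 0$, and on each $iAj$ with $i \neq j$ --- a space of dimension at most one --- they must act as multiplications by scalars. Since scalars commute, $[f,g]$ is zero on every $iAj$ and hence on $A$. Claim (ii) then follows from Lemma \ref{Ezero}: every class in $\HH^1(A)$ has a representative vanishing on $E$, any two such representatives commute by (i), so $\HH^1(A)$ is abelian in the basic case, and the general case descends by Morita invariance.

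For (iii) I would count dimensions in the basic case, additionally assuming that the quiver is connected (equivalently, $A$ is indecomposable as an algebra; otherwise the count must be performed componentwise). By the analysis above, the space $D_0$ of derivations vanishing on $E$ is parametrized by one scalar per arrow of the quiver: the action on each at-most-one-dimensional summand $iJ(A)j$ is a scalar, and conversely any such assignment defines a derivation since the Leibniz rule collapses in view of $J(A)^2 = \{0\}$ --- this verification is the one step requiring genuine care. Thus $\dim_k D_0 = e(A)$. The canonical map $D_0 \to \HH^1(A)$ is surjective by Lemma \ref{Ezero}, and its kernel $D_0 \cap \IDer(A)$ is the image of $c \mapsto [c,-]$ restricted to $C_A(E) = \bigoplus_{i \in I} iAi = \bigoplus_{i \in I} ki$ of dimension $\ell(A)$, the kernel of that map being $Z(A) = k \cdot 1_A$ of dimension one under the connectedness hypothesis. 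Therefore $\dim_k \HH^1(A) = e(A) - (\ell(A) - 1) = e(A) - \ell(A) + 1$, and the bound $e(A) \leq \ell(A)(\ell(A) - 1)$ --- coming from the absence of loops and from $\dim_k \Ext_A^1 \leq 1$ --- yields $e(A) - \ell(A) + 1 \leq (\ell(A) - 1)^2$.
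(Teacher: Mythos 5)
Your proof is correct and follows essentially the same route as the paper's: reduce to the basic case, observe that derivations vanishing on $E$ preserve and act by scalars on the at-most-one-dimensional summands $iAj$ (hence commute), and then count dimensions of such derivations against the inner derivations coming from $C_A(E)=\bigoplus_{i\in I} ki$. The one point where you go beyond the paper is the connectedness caveat in (iii): the paper's assertion that the only central $k$-linear combinations of the idempotents in $I$ are the multiples of $1$ tacitly assumes the quiver is connected, and indeed the formula $\dim_k \HH^1(A)=e(A)-\ell(A)+1$ fails for a disconnected quiver (e.g.\ $A=k\times k$ would give $-1$), so your explicit indecomposability hypothesis, with the componentwise count otherwise, is the right fix.
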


\begin{proof}
In order to prove (i), suppose that $A$ is basic. Let $I$ be a 
primitive decomposition of $1$ in $A$ such that $E=$ 
$\prod_{i\in I} ki$. Let $f$ and $g$ be derivations on $A$ which vanish 
on $E$. Then $f$, $g$ are determined by their restrictions to $J(A)$. 
By Lemma \ref{Ext1zero}, the derivations $f$, $g$ preserve $J(A)$. By 
the assumptions, each summand $iAj$ in  the vector space decomposition
$A=$ $\oplus_{i,j\in I}\ iAj$  has dimension at most one. By Lemma 
\ref{iAjLemma}, any derivation on $A$ which vanishes on $E$
preserves this decomposition. 
Therefore, if $X$ is a basis of $J(A)$ consisting of elements of
the subspaces $iAj$, $i$, $j\in$ $I$, which are nonzero, then 
$f|_{J(A)} : J(A)\to$ $J(A)$ is represented by a diagonal matrix. 
Similary for $g$. But then the restrictions of $f$ and $g$ to $J(A)$ 
commute. Since both $f$, $g$ vanish on $E$ , this implies that 
$[f,g]=0$, whence (i). If $A$ is basic, then clearly (i)  and
Lemma \ref{Ezero} together imply (ii). Since the hypotheses of the 
Lemma as well as the Lie algebra $\HH^1(A)$ are invariant under Morita 
equivalences, statement (ii) follows for general $A$.
In order to prove (iii), assume again that $A$ is basic. By the
assumptions, $e(A)=$ $\dim_k(J(A))=$ $|X|$. 
One verifies that the extension to $A$ by zero on $I$ of any linear 
map on $J(A)$ which preserves the summands $iAj$ (with $i\neq j$),
or equivalently, which preserves the one-dimensional spaces $kx$,
where $x\in$ $X$, is in fact a derivation. By Lemma \ref{Ezero}, any 
class in $\HH^1(A)$ is represented by such a derivation.
Thus the space of derivations on $A$ which vanish on $I$ is equal
to $\dim_k(J(A))=e(A)$. Each $i\in$ $I$ contributes an inner
derivation. The only $k$-linear combination of elements in $I$ 
which belongs to $Z(A)$ are the multiples of $1=$ $\sum_{i\in I} i$.
Thus the space of inner derivations which annihilate $I$ has
dimension $\ell(A)-1$, whence the first equality.
Since there are at most $\ell(A)-1$ arrows starting at any given
vertex, it follows that $e(A)\leq (\ell(A)-1)\ell(A)$, whence the
inequality as stated.
\end{proof}

The above Proposition can also be proved as a consequence of more 
general work of Strametz \cite{Strametz}, calculating the Lie algebra 
$\HH^1(A)$ for $A$ a split finite-dimensional monomial algebra. 

\section{Proofs of Theorems \ref{Ext1-1dim-thm} and \ref{Ext1-2dim-thm}}
\label{proofs-Section}

Theorem \ref{Ext1-1dim-thm} is a part of the following slightly more 
precise result. Let $k$ be a field.

\begin{Theorem} \label{Ext1-1dim-ll}
Let $A$ be a split finite-dimensional $k$-algebra. Suppose that for
every simple $A$-module $S$ we have $\Ext^1_A(S,S)=$ $\{0\}$ and that
for any two simple $A$-modules $S$, $T$ we have
$\dim_k(\Ext_A^1(S,T))\leq 1$. Set $\CL=$ $\HH^1(A)$, regarded as a Lie
algebra. 

\begin{enumerate} 
\item[\rm (i)]
If $\ll(A)\leq 2$ then $\CL$ is abelian.

\item[\rm (ii)]
If $\ll(A)>2$, then the derived Lie algebra $\CL'=$ $[\CL,\CL]$ is 
nilpotent of nilpotency class at most $\ll(A)-2$. The derived length
of $\mathcal{L}$ is at most $\log_2(\ll(A)-1)+1$.

\end{enumerate}

In particular, $\CL$ is solvable, and if $k$ is algebraically closed,
then $\CL$ is completely solvable. 
\end{Theorem}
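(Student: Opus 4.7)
The plan is to chain the earlier propositions of this section, with the abelian case (i) providing the base of an induction-in-disguise. Statement (i) is an immediate instance of Proposition \ref{Ext1-1dim}(ii), so all the real content lies in (ii). Throughout I fix a separable subalgebra $E$ with $A=E\oplus J(A)$ and let $D_m\subseteq \Der(A)$ denote the subspaces of Proposition \ref{rad-Prop}.

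For (ii), I would first invoke Lemma \ref{Ext1zero} to see that every derivation of $A$ preserves $J(A)$, so Proposition \ref{AtoAJA2} produces a Lie algebra homomorphism $\Phi:\HH^1(A)\to \HH^1(A/J(A)^2)$. The quotient algebra $A/J(A)^2$ has the same simple modules as $A$ and the same $\Ext^1$-groups between them, so it still satisfies the hypotheses of the theorem, and its Loewy length is at most $2$; by (i), the target $\HH^1(A/J(A)^2)$ is abelian. Consequently $\CL'=[\CL,\CL]\subseteq \ker(\Phi)$.

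Next, Proposition \ref{AtoAJA2}(ii) identifies $\ker(\Phi)$ as the image of $D_2$ under the canonical surjection $\Der(A)\to\HH^1(A)$, while Proposition \ref{rad-Prop}(iii) gives that $D_2$ is nilpotent of class at most $\ll(A)-2$. Since nilpotency class is inherited both by quotients and by subalgebras, it follows that $\CL'$ is nilpotent of class at most $\ll(A)-2$, giving the first assertion of (ii). For the derived-length bound I would use the standard containment $N^{(m)}\subseteq \gamma_{2^m}(N)$ valid for any Lie algebra $N$, proved by an easy induction from $[\gamma_i(N),\gamma_j(N)]\subseteq\gamma_{i+j}(N)$; applied to $N=\CL'$ this converts the nilpotency bound just established into the stated derived-length bound for $\CL$. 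Solvability is then immediate, and over an algebraically closed $k$, Remark \ref{NS} upgrades the strong solvability we have obtained (nilpotent derived subalgebra) to complete solvability.

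There is no real obstacle: the crux is the observation in the second paragraph that $A/J(A)^2$ again satisfies the hypotheses of the theorem, which is exactly what allows (i) to serve as the base case and reduces the problem to controlling $\ker(\Phi)$ via the radical-filtration machinery of Proposition \ref{rad-Prop}. The only small point I would double-check is the translation between the paper's indexing convention ($\CL^1=\CL'$) and the standard lower central series $\gamma_i$ used in the derived-length estimate, since an off-by-one here is the only way the stated bound $\log_2(\ll(A)-1)+1$ could fail to fall out cleanly.
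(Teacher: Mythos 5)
Your proof is correct and follows essentially the same route as the paper's: reduce to the abelian case for $A/J(A)^2$ via Proposition \ref{Ext1-1dim}, deduce $\CL'\subseteq\ker(\Phi)$, and control $\ker(\Phi)$ through $D_2$ using Propositions \ref{AtoAJA2} and \ref{rad-Prop}. The only cosmetic difference is that the paper derives the derived-length bound by tracking $f(J(A))\subseteq J(A)^{2^{n-1}+1}$ directly through the filtration $D_m$, whereas you use the abstract containment of the derived series in the lower central series of the nilpotent ideal $\CL'$; the two computations are equivalent and your indexing works out to the stated bound.
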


\begin{proof}
If $\ll(A)\leq$ $2$, then $J(A)^2=\{0\}$, and hence (i) follows from
Proposition \ref{Ext1-1dim}. Suppose that $\ll(A)>2$. 
We may assume that $A$ is basic. Note that $A$ and $A/J(A)^2$ 
have the same $\Ext$-quiver, and hence we may apply Proposition 
\ref{Ext1-1dim} to the algebra $A/J(A)^2$; in particular, 
$\HH^1(A/J(A)^2)$ is abelian. Thus the kernel of the canonical Lie
algebra homomorphism $\CL=\HH^1(A)\to$ $\HH^1(A/J(A)^2)$ contains
$\CL'$. Proposition \ref{AtoAJA2} implies that $\CL'$ is contained in 
the image of $D_2$, hence nilpotent of nilpotency class at most 
$\ll(A)-2$ by Proposition  \ref{rad-Prop}. From the same proposition 
we have that if $f\in \mathcal{L}^{(n)}$, then 
$f(J(A))\subseteq J(A)^{2^{n-1}+1}$ for $n\geq 1$.  Therefore the 
derived length is at most $\log_2(\ll(A)-1)+1$. Since $\CL'$ is 
nilpotent, it follows that if $k$ is algebraically closed, then $\CL$ 
is completely solvable.
\end{proof}

\begin{proof}[{Proof of Theorem \ref{Ext1-2dim-thm}}]
By Lemma \ref{Ext1zero}, every
derivation $f : A\to$ $A$ preserves $J(A)$, and hence sends
$J(A)^2$ to $J(A)^2$. Thus the canonical map $A\to$ $A/J(A)^2$
induces a Lie algebra homomorphism $\varphi : \Der(A) \to$ 
$\Der(A/J(A)^2)$ which in turn induces a Lie algebra homomorphism
$\Phi : \HH^1(A)\to$ $\HH^1(A/J(A)^2)$. By Proposition 
\ref{AtoAJA2}, $\ker(\Phi)$ is a nilpotent ideal. 
If $\chr(k)=2$, then $\HH^1(A/J(A)^2)$ is solvable by 
\cite[Corollary 4.12]{Strametz}, and hence $\HH^1(A)$ is solvable.
Suppose now that $\HH^1(A)$ is not solvable. Then, by the above, we
have $\chr(k)\neq 2$. Then, by 
\cite[Corollary 4.11, Remark 4.16]{Strametz}, the Lie algebra 
$\HH^1(A/J(A)^2)$ is a finite direct product of copies of $\sl_2(k)$.
Thus $\HH^1(A)/\ker(\Phi)$ is a subalgebra of a finite direct product
of copies of $\sl_2(k)$, and hence $\HH^1(A)/\rad(\HH^1(A))$ is a
subquotient of a finite direct product of copies of $\sl_2(k)$.
Since any proper Lie subalgebra of $\sl_2(k)$ is solvable, it
follows easily that the semisimple Lie algebra 
$\HH^1(A)/\rad(\HH^1(A))$ is a finite direct product of copies of
$\sl_2(k)$.
\end{proof}

\section{Schur functors and proof of Theorem \ref{loop}}
\label{loopSection}

The hypothesis $J(iAi)^2=$ $iJ(A)^2i$ in the statement of Theorem
\ref{loop} means that for any primitive idempotent $j$ not conjugate 
to $i$ in $A$ we have $iAjAi\subseteq$ $J(iAi)^2$; that is, the image 
in $iAi$ of any path parallel to the loop at $i$ which is different 
from that loop is contained in $J(iAi)^2$. We start by collecting 
some elementary observations which will be used in the proof of 
Theorem \ref{loop}. Let $k$ be a field. 

\begin{Lemma} \label{AeAderivation}
Let $A$ be a $k$-algebra and $e$ an idempotent in $A$. Let 
$f : A\to$ $A$ be a derivation. The following hold.
\begin{enumerate}
\item[{\rm (i)}] We have $f(AeA)\subseteq$ $AeA$.
\item[{\rm (ii)}] We have $ef(e)e=$ $0$.
\item[{\rm (iii)}] We have $(1-e)f(e)(1-e)=$ $0$.
\item[{\rm (iv)}] We have $f(e)\in$ $eA(1-e)\oplus (1-e)Ae$.
\item[{\rm (v)}] We have $f(e) = [[f(e),e)], e]$; equivalently, the
derivation $f - [[f(e),e], -]$ vanishes at $e$.
\item[{\rm (vi)}] If $f(e)=0$, then for any $a\in$ $A$ we have
$f(eae)=$ $ef(a)e$; in particular, $f(eAe)\subseteq$ $eAe$ and
$f$ induces a derivation on $eAe$.
\item[{\rm (vii)}] If $f(e)=0$ and if $f$ is an inner derivation
on $A$, then $f$ restricts to an inner derivation on $eAe$.
\end{enumerate}
\end{Lemma}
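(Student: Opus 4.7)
The whole lemma rests on one identity: differentiating $e=e^2$ via the Leibniz rule gives
$$f(e) = f(e)e + ef(e).$$
Almost every item will be a one-line manipulation of this relation, so the plan is to establish it first and then dispatch the seven parts in order, reusing the preceding items as they become available.

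For (ii), multiply the identity on the left by $e$ to get $ef(e)=ef(e)e+ef(e)$, hence $ef(e)e=0$. For (iii), multiply the identity on the left by $1-e$, obtaining $(1-e)f(e)=(1-e)f(e)e$, and then on the right by $1-e$. Statement (iv) is then immediate from the Peirce decomposition $A=eAe\oplus eA(1-e)\oplus(1-e)Ae\oplus(1-e)A(1-e)$ combined with (ii) and (iii). For (i), apply Leibniz to a typical element $aeb\in AeA$:
$$f(aeb)=f(a)eb+af(e)b+aef(b),$$
the outer terms lie in $AeA$ by inspection, and the middle one does too because $f(e)=f(e)e+ef(e)\in Ae+eA\subseteq AeA$.

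For (v), I would expand $[[f(e),e],e]=f(e)e-2ef(e)e+ef(e)$ and substitute $ef(e)e=0$ from (ii); the result collapses to $f(e)e+ef(e)$, which is $f(e)$ by the basic identity. This proves the first assertion, and the equivalence with $f-[[f(e),e],-]$ vanishing at $e$ is just the definition of an inner derivation. Item (vi) is again a direct Leibniz computation: assuming $f(e)=0$,
$$f(eae)=f(e)ae+ef(ae)=ef(a)e+eaf(e)=ef(a)e,$$
and this shows $f(eAe)\subseteq eAe$.

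For (vii), by (vi) the restriction $f|_{eAe}$ is a derivation, and we need an element of $eAe$ witnessing its innerness. If $f=[c,-]$ on $A$, then for any $eae\in eAe$, using (vi) in the form $f(eae)=ef(eae)e$,
$$f(eae)=e(ceae-eaec)e=(ece)(eae)-(eae)(ece)=[ece,eae],$$
so $ece\in eAe$ is the desired inner generator. I do not anticipate a genuine obstacle anywhere; the only point that requires a tiny bit of care is remembering in (vii) that one must conjugate the chosen $c\in A$ to $ece\in eAe$ rather than hoping $c$ itself lies in $eAe$.
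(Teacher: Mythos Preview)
Your proof is correct and follows essentially the same approach as the paper's, relying on the identity $f(e)=f(e)e+ef(e)$ and straightforward Leibniz manipulations. The only cosmetic differences are that the paper handles (i) via the factorisation $aeb=ae\cdot eb$ (avoiding the middle term altogether) and proves (vii) by first noting that $f(e)=0$ forces $ce=ec$, so that the restriction is $[ce,-]$; your element $ece$ coincides with $ce$ in that situation.
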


\begin{proof}
Let $a$, $b\in$ $A$. Then $aeb=$ $aeeb$, hence $f(aeb)=$
$aef(eb)+f(ae)eb\in$ $AeA$, implying the first statement.
We have $f(e)=$ $f(e^2)=$ $f(e)e+ef(e)$. Right multiplication of this
equation by $e$ yields $f(e)e=$ $f(e)e+ef(e)e$, whence the second
statement. Right and left multiplication of the same equation by $1-e$
yields the third statement. Statement (iv) follows from combining
the statements (ii) and (iii). We have
$[[f(e),e], e]=$ $[f(e)e-ef(e), e]$. Using that $ef(e)e=0$ this
is equal to $f(e)e+ef(e)=f(e)$, since $f$ is a derivation. This
shows (v). Suppose that $f(e)=0$. Let $a\in$ $A$. Then $f(eae)=$
$f(e)ae+ef(a)e+eaf(e)=$ $ef(a)e$, whence (vi). If in addition
$f=$ $[c, -]$ for some $c\in$ $A$, then the hypothesis $f(e)=0$
implies that $ec=ce$, and hence (vi) implies that the restriction
of $f$ to $eAe$ is equal to the inner derivation $[ce, -]$. This
completes the proof of the Lemma.
\end{proof}

\begin{Proposition} \label{der-Schur}
Let $A$ be a $k$-algebra, and let $e$ be an
idempotent in $A$. For any derivation $f$ on $A$ satisfying $f(e)=0$
denote by $\varphi(f)$ the derivation on $eAe$ sending $eae$ to
$ef(a)e$, for all $a\in$ $A$. The correspondence $f\mapsto \varphi(f)$
induces a Lie algebra homormophism $HH^1(A)\to$ $\HH^1(eAe)$.
If $A$ is an algebra over a field of prime characteristic $p$, then
this map is a homomorphism of $p$-restricted Lie algebras.
\end{Proposition}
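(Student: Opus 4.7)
The plan is to establish, in sequence, the following points: (a) for any $f\in\Der(A)$ with $f(e)=0$, the prescription $\varphi(f)(eae)=ef(a)e$ is a well-defined derivation of $eAe$; (b) every class in $\HH^1(A)$ admits a representative vanishing at $e$; (c) the resulting map $\HH^1(A)\to\HH^1(eAe)$ is well-defined; (d) it is a Lie algebra homomorphism; (e) in prime characteristic it respects the $p$-operation.

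Step (a) is Lemma \ref{AeAderivation}(vi): if $f(e)=0$ then $f(eae)=ef(a)e$, so $\varphi(f)$ is just the restriction of $f$ to the subalgebra $eAe$, hence a derivation. Step (b) is immediate from Lemma \ref{AeAderivation}(v): for an arbitrary $g\in\Der(A)$, the derivation $g-[[g(e),e],-]$ vanishes at $e$ and represents the same class as $g$ in $\HH^1(A)$. For (c), suppose $f_1,f_2\in\Der(A)$ both vanish at $e$ and satisfy $f_1-f_2=[c,-]$ for some $c\in A$. Then $[c,e]=(f_1-f_2)(e)=0$, so $c$ commutes with $e$; writing $c=ece+(1-e)c(1-e)$, the component $(1-e)c(1-e)$ acts as zero on $eAe$ by left and right multiplication, and one reads off that $\varphi(f_1)-\varphi(f_2)$ coincides with the inner derivation $[ece,-]$ on $eAe$ (this is essentially Lemma \ref{AeAderivation}(vii)).

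For (d), if $f,g\in\Der(A)$ both vanish at $e$, then so does $[f,g]=f\circ g-g\circ f$. For any $a\in A$ we have
\[
\varphi([f,g])(eae)=e[f,g](a)e=ef(g(a))e-eg(f(a))e,
\]
while, applying the formula $\varphi(f)(ebe)=ef(b)e$ with $b=g(a)$ and symmetrically,
\[
[\varphi(f),\varphi(g)](eae)=\varphi(f)(eg(a)e)-\varphi(g)(ef(a)e)=ef(g(a))e-eg(f(a))e,
\]
so the two agree. For (e), the $p$-operation on $\Der(A)$ is the $p$-fold composition, and this induces the $p$-operation on $\HH^1(A)$. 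Assuming $f(e)=0$, a straightforward induction on $n$ shows $\varphi(f)^n(eae)=ef^n(a)e$, the inductive step being $\varphi(f)^{n+1}(eae)=\varphi(f)(ef^n(a)e)=ef^{n+1}(a)e$. Specialising to $n=p$ gives $\varphi(f)^{[p]}=\varphi(f^{[p]})$ on $eAe$.

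The only genuinely delicate point is (c), and concretely the observation that an inner derivation $[c,-]$ on $A$ whose value at $e$ is zero must come from an element $c$ commuting with $e$, so that its restriction to $eAe$ is inner via $ece$. Everything else is a formal consequence of Lemma \ref{AeAderivation}, and no additional machinery is required.
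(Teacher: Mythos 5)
Your proof is correct and follows essentially the same route as the paper: both arguments rest on Lemma \ref{AeAderivation} (v) to choose representatives vanishing at $e$, (vi) to see that $\varphi(f)$ is the restriction of $f$ to $eAe$, and (vii) for the well-definedness on inner derivations, with the bracket and $p$-power compatibility checked by direct computation. Your treatment is merely a little more explicit at the two places the paper leaves as ``easy calculations'' (the decomposition $c=ece+(1-e)c(1-e)$ and the induction giving $\varphi(f)^n(eae)=ef^n(a)e$).
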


\begin{proof}
Let $f$ be an arbitrary derivation on $A$. By Lemma
\ref{AeAderivation} (v), the derivation $f-[[f(e), e], - ]$ vanishes
at $e$. Thus every class in $\HH^1(A)$ has a representative
in $\Der(A)$ which vanishes at $e$. By Lemma \ref{AeAderivation} (vi),
any derivation on $A$ which vanishes at $e$ restricts to a derivation
on $eAe$, and by Lemma \ref{AeAderivation} (vii), this restriction
sends inner derivations on $A$ to inner derivations on $eAe$, hence
induces a map $\HH^1(A)\to$ $\HH^1(eAe)$. A trivial verification
shows that if $f$, $g$ are two derivations on $A$ which vanish at $e$,
the so does $[f,g]$, and an easy calculation shows that therefore
the above map $\HH^1(A)\to$ $\HH^1(eAe)$ is a Lie algebra
homomorphism. If $A$ is an algebra over a field of characteristic
$p>0$, and if $f$ is a derivation on $A$ which vanishes at $e$, then
the derivation $f^p$ vanishes on $e$ and the restriction to $eAe$
commutes with taking $p$-th powers by Lemma \ref{AeAderivation} (vi).
This shows the last statement.
\end{proof}

We call the Lie algebra homomorphism $\HH^1(A)\to$ $\HH^1(eAe)$ in
Proposition \ref{der-Schur} the {\it canonical Lie algebra
homomorphism} induced by the Schur functor given by multiplication
with the idempotent $e$.

For $A$ a finite-dimensional $k$-algebra and $m$ a positive integer, 
denote by $\HH^1_{(m)}(A)$ the subspace of $\HH^1(A)$ of classes which
have a a representative $f\in$ $\Der(A)$ satisfying 
$f(J(A))\subseteq$ $J(A)^m$.

\begin{Proposition} \label{der1-Schur}
Let $A$ be a split finite-dimensional $k$-algebra. Let 
$i$ be a primitive idempotent in $A$. Set $S=$ $Ai/J(A)i$. Suppose 
that $\Ext^1_A(S,S) =$ $\{0\}$. Then image of the canonical map
$\HH^1(A)\to$ $\HH^1(iAi)$ is contained in $\HH^1_{(1)}(iAi)$.
\end{Proposition}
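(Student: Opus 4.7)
The plan is to lift a class in $\HH^1(A)$ to a derivation that vanishes at $i$, push it through the Schur functor of Proposition \ref{der-Schur}, and use the $\Ext^1$-hypothesis to produce one extra factor of $J(A)$ — exactly what the Leibniz rule needs in order to land back inside the radical.

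Concretely, take $\xi \in \HH^1(A)$ and, by Lemma \ref{AeAderivation}(v), represent it by a derivation $f \in \Der(A)$ satisfying $f(i)=0$. By Lemma \ref{AeAderivation}(vi), $f$ then restricts to a derivation on $iAi$, and this restriction coincides with $g := \varphi(f)$. By the definition of the canonical map from Proposition \ref{der-Schur}, the class $[g]$ is the image of $\xi$ in $\HH^1(iAi)$. Using the standard identification $J(iAi) = iJ(A)i$, to show that $[g]$ lies in $\HH^1_{(1)}(iAi)$ it suffices to establish the inclusion $g(iJ(A)i) \subseteq iJ(A)i$, since $g$ itself will then be the desired representative.

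Here the $\Ext^1$-hypothesis enters: Lemma \ref{Ext1-Lemma2} translates $\Ext^1_A(S,S) = \{0\}$ into the crucial containment $iJ(A)i \subseteq J(A)^2$. Since $J(A)$ is a two-sided ideal, the Leibniz rule immediately yields
\[
f(J(A)^2)\ \subseteq\ f(J(A))\cdot J(A) + J(A)\cdot f(J(A))\ \subseteq\ J(A),
\]
and therefore
\[
g(iJ(A)i)\ =\ f(iJ(A)i)\ \subseteq\ f(J(A)^2)\ \subseteq\ J(A).
\]
Combined with the fact that $g$ takes values in $iAi$, this forces $g(iJ(A)i) \subseteq iAi \cap J(A) = iJ(A)i = J(iAi)$, completing the argument.

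The only step requiring genuine insight is the translation of the $\Ext^1$-hypothesis into the containment $iJ(A)i \subseteq J(A)^2$ via Lemma \ref{Ext1-Lemma2}; everything else is a one-line Leibniz computation, so I do not foresee a serious obstacle.
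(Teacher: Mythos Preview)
Your proof is correct and follows essentially the same line as the paper: represent the class by a derivation $f$ with $f(i)=0$ via Lemma~\ref{AeAderivation}(v), invoke Lemma~\ref{Ext1-Lemma2} to get $iJ(A)i\subseteq J(A)^2$, and apply the Leibniz rule. The paper's computation is marginally more direct (writing $iai=ibci$ with $b,c\in J(A)$ and expanding $f(ibci)=if(b)ci+ibf(c)i\in iJ(A)i$), whereas you first show $f(J(A)^2)\subseteq J(A)$ and then intersect with $iAi$; the two are interchangeable.
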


\begin{proof}
By Lemma \ref{Ext1-Lemma2} we have $iJ(A)i=$ $iJ(A)^2i$.
By Lemma \ref{AeAderivation} (v), any class
in $\HH^1(A)$ is represented by a derivation $f$ satisfying $f(i)=0$.
Thus if $a\in$ $J(A)$, then $iai=$ $ibci$ for some $b$, 
$c\in$ $J(A)$, and hence $f(iai)=$ $if(b)ci+ibf(c)i\in$ $iJ(A)i$. 
\end{proof}

\begin{Proposition}  \label{der-Schur-nonzero}
Let $A$ be a split symmetric $k$-algebra. 
Let $i$ be a primitive idempotent in $A$. Set $S=$ $Ai/J(A)i$.
Suppose that $\Ext^1_A(S,S)\neq$ $\{0\}$. Then the canonical Lie
algebra homomorphism $\HH^1(A)\to$ $\HH^1(iAi)$ is nonzero.
\end{Proposition}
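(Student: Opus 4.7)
My plan is to produce a derivation of $A$ whose restriction to $iAi$ is non-inner on $iAi$.

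First, I would translate the hypothesis via Lemma~\ref{Ext1-Lemma2}: $\Ext^1_A(S,S) \neq \{0\}$ is equivalent to $iJ(A)i \not\subseteq J(A)^2$, so I can pick $\alpha \in iJ(A)i$ with $\alpha \notin J(A)^2$. Since $J(iAi)^2 = iJ(A)i\cdot iJ(A)i \subseteq iJ(A)^2i \subseteq J(A)^2$, this $\alpha$ lies in $J(iAi)\setminus J(iAi)^2$.

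Next I would reduce to a concrete target: it suffices to exhibit $f\in \Der(A)$ with $f(i)=0$ and $f(\alpha)\notin J(A)^2$. Indeed, by Lemma~\ref{AeAderivation}(vi), such an $f$ descends to a derivation $\bar f$ on $iAi$ with $\bar f(\alpha)=if(\alpha)i=f(\alpha)$ (using $\alpha=i\alpha i$ and $f(i)=0$). If $\bar f$ were inner, say $\bar f=[c,-]$ with $c\in iAi$, then decomposing $c=\mu i+c'$ with $\mu\in k$ and $c'\in J(iAi)$ (possible because $iAi$ is split local, and the scalar part has trivial bracket in $iAi$), we would obtain $\bar f(\alpha)=[c',\alpha]\in[J(iAi),J(iAi)]\subseteq J(iAi)^2\subseteq J(A)^2$, contradicting the choice of $f$. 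So $[\bar f]$ would be a nonzero image of $[f]$ under the canonical map.

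The main step---and the main obstacle---is the construction of such an $f$, and this is where the symmetric hypothesis enters essentially. My idea is to exploit the symmetrizing form $\lambda : A\to k$ and the associated non-degenerate symmetric associative bilinear pairing $(a,b)=\lambda(ab)$. After fixing a separable subalgebra $E$ with $A=E\oplus J(A)$ and $i\in E$, and a $k$-basis of $A$ compatible with the Peirce decomposition $A=\bigoplus_{e,f\in I}eAf$ and with the Loewy filtration (including $\alpha$ as one of the basis elements representing an arrow in the Ext-quiver at $i$), I would pair $\alpha$ via $\lambda$ with a basis element $\alpha^\vee\in iJ(A)^{\ll(A)-2}i$, and use this $\lambda$-duality to prescribe $f$ on the basis so that $f|_E=0$ and $f(\alpha)$ has a nonzero component modulo $J(A)^2$.

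The technical heart of the argument is verifying that such a prescription extends consistently across all the defining multiplicative relations of $A$. The symmetry and associativity of the form $\lambda(ab)$ should be exactly what makes this work, by packaging the structure constants of $A$ (and in particular its relations) in a dual-basis form that is compatible with the Leibniz rule applied term by term. Without the symmetric hypothesis, analogous arrow-counting prescriptions can fail to respect the relations; so this verification is the step that genuinely invokes the hypothesis.
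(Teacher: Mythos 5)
Your reduction is fine: if a derivation $f$ on $A$ with $f(i)=0$ satisfies $f(\alpha)\notin J(A)^2$ for some $\alpha\in iJ(A)i\setminus J(A)^2$, then its restriction to $iAi$ cannot be inner, since $[\mu i+c',\alpha]=[c',\alpha]\in J(iAi)^2\subseteq J(A)^2$ for $c'\in J(iAi)$. The genuine gap is that you never construct such an $f$; the entire burden of the proposition sits in the step you label ``the main obstacle'' and then only sketch. The sketched prescription --- pairing $\alpha$ with a $\lambda$-dual element $\alpha^\vee$ and defining $f$ on a basis so that $f(\alpha)$ survives modulo $J(A)^2$ --- is not shown to satisfy the Leibniz rule, and in its most natural rank-one reading it already fails for $A=k[x]/(x^p)$: the map $a\mapsto\lambda(ax^{p-2})x$ sends $x$ to $x$ but $x^2$ to $0$, whereas Leibniz forces $f(x^2)=2x^2\neq 0$ for $p>2$. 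More fundamentally, it is not at all clear that a derivation acting nontrivially on $J(A)/J(A)^2$ at the loop $\alpha$ exists in general; you are attempting to prove something strictly stronger than what the proposition requires.

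The paper's proof goes in the opposite direction and thereby avoids the relation-checking entirely. Using Lemma \ref{Ext1-Lemma1} and the symmetry of $A$ (which gives $\soc(A)\cong A/J(A)$ as bimodules), it produces a bimodule homomorphism $f:J(A)\to\soc(A)$ killing $J(A)^2$ with image $S\tenk S^\vee$; extended by zero on $E$ this is automatically a derivation, because every product one must check either lands in $J(A)^2=\ker(f)$ or is annihilated by $\soc(A)$. The resulting derivation maps $iJ(A)i$ onto $\soc(iAi)$, which lies \emph{inside} $J(A)^2$ whenever $\ll(A)\geq 3$ --- so it would not meet your criterion, and its non-innerness on $iAi$ is not elementary: it is supplied by \cite[Corollary 3.2]{BKL}, which is where the symmetrizing form does its second piece of work. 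If you want to salvage your approach, you must either prove the existence of a derivation acting nontrivially on $J(iAi)/J(iAi)^2$ (and I see no reason this holds in general), or replace your non-innerness test by a criterion, such as the BKL one, that applies to derivations with image deep in the radical.
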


\begin{proof}
Set $S^\vee=$ $iA/iJ(A)$. Choose a maximal semisimple subalgebra
$E$ of $A$. Since $\Ext^1_A(S,S)$ is nonzero, it follows from Lemma 
\ref{Ext1-Lemma1} that $J(A)/J(A)^2$ has a direct summand isomorphic to 
$S\tenk S^\vee$ as an $A$-$A$-bimodule. Since $A$ is symmetric, we have
$\soc(A)\cong$ $A/J(A)$, and hence $\soc(A)$ has a bimodule summand 
isomorphic to $S\tenk S^\vee$. Thus there is a bimodule homomorphism 
$J(A)/J(A)^2\to$ $\soc(A)$ with image isomorphic to $S\tenk S^\vee$. 
Composing with the canonical map $J(A)\to$ $J(A)/J(A)^2$ yields a 
bimodule homomorphism $f : J(A)\to$ $\soc(A)$ with kernel containing 
$J(A)^2$ and with image isomorphic to $S\tenk S^\vee$. 
Extending $f$ by zero on $E$ yields a derivation $\hat f$ on $A$, by 
Lemma \ref{Ezero}. Restricting $\hat f$ to $iJ(A)i$ sends $iJ(A)i$ 
to a nonzero subspace of $\soc(A)$ isomorphic to $iS\tenk S^\vee i$, 
hence onto $\soc(iAi)$. Thus the image of $\hat f$ under the canonical 
map $\Der(A)\to$ $\Der(iAi)$ from Proposition \ref{der-Schur} is a 
nonzero derivation with kernel containing $ki+J(iAi)^2$ and image in 
$\soc(iAi)$. By \cite[Corollary 3.2]{BKL}, the class in $\HH^1(iAi)$ 
of this derivation is nonzero, whence the result.
\end{proof}

\begin{Proposition} \label{Wittsubalgebras}
Let $p$ be an odd prime and suppose that $k$ is algebraically closed
of characteristic $p$. Set $W=$ $\Der(k[x]/(x^p))$. For 
$-1\leq i\leq p-2$ let $f_i$ be the derivation of $k[x]/(x^p)$ sending 
$x$ to $x^{i+1}$, where we identify $x$ with its image in $k[x]/(x^p)$. 
Let $L$ be a simple Lie subalgebra of $W$. Then either $L=W$, or 
$L\cong$ $\sl_2(k)$. 
\end{Proposition}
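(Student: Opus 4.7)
The plan is to exploit the $\Z$-grading $W = \bigoplus_{i=-1}^{p-2} k f_i$, where each $f_i$ is placed in degree $i$. With the convention $f_k = 0$ for $k \notin \{-1, \ldots, p-2\}$, the bracket becomes $[f_i, f_j] = (j-i)\, f_{i+j}$. For $n \ge 0$ I set $W_n = \bigoplus_{i \ge n} k f_i$; these are subalgebras with $[W_m, W_n] \subseteq W_{m+n}$. In particular $W_1$ is nilpotent, and since $[f_0, f_j] = j f_j$ one has $[W_0, W_0] = W_1$, so $W_0$ is solvable.

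To analyse a Lie subalgebra $L \subseteq W$ I will introduce its depth set
\[
I(L) = \{\, k \in \{-1, 0, \ldots, p-2\} : L \cap W_k \neq L \cap W_{k+1}\,\},
\]
which has the property that $\dim L = |I(L)|$. For each $k \in I(L)$ I fix a representative $u_k = f_k + w_k \in L$ with $w_k \in W_{k+1}$. Expanding the Lie bracket and noting that every cross term lies in $W_{i+j+1}$ gives
\[
[u_i, u_j] \equiv (j-i)\, f_{i+j} \pmod{W_{i+j+1}}.
\]
For distinct $i, j \in \{-1, \ldots, p-2\}$ one has $0 < |j-i| \le p - 1$, so $j - i$ is invertible in $k$; hence, whenever $i, j \in I(L)$ are distinct and $i + j \in \{-1, \ldots, p-2\}$, the leading term above is nonzero and therefore $i + j \in I(L)$. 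This closure property is the central technical input.

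Now I assume $L$ is simple. Solvability of $W_0$ forces $L \not\subseteq W_0$, so $-1 \in I(L)$. Applying the closure property with $i = -1$ shows $k - 1 \in I(L)$ whenever $k \in I(L)$ with $k \ge 0$, giving $I(L) = \{-1, 0, \ldots, m\}$ for some $-1 \le m \le p-2$. A second application of closure to the pair $(1, m)$ with $m \ge 2$ forces $m + 1 \in I(L)$ whenever $m + 1 \le p - 2$, which is absurd since $m+1 > m$; hence $m \ge 2$ implies $m = p - 2$. Thus $m \in \{-1, 0, 1, p-2\}$ and $\dim L \in \{1, 2, 3, p\}$. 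Simplicity of $L$ forces $\dim L \ge 3$, leaving either $\dim L = p$ (whence $L = W$ by dimension count) or $\dim L = 3$ (whence $L \cong \sl_2(k)$, the unique three-dimensional simple Lie algebra over an algebraically closed field of characteristic $p \ne 2$). The most delicate step is the leading-term computation justifying the closure property; once it is in hand, everything else is a short combinatorial case analysis.
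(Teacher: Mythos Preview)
Your proof is correct and follows essentially the same approach as the paper: both exploit the filtration $W=W_{-1}\supseteq W_0\supseteq\cdots$ and a leading-term analysis (noting that $W_0$ is solvable so $L$ must have an element with leading term $f_{-1}$, and then bracketing to push leading indices down and up) to conclude that a simple subalgebra has dimension $3$ or $p$. Your packaging via the depth set $I(L)$ and its additive closure property is a cleaner and more systematic formulation of exactly the argument the paper carries out by hand with the specific elements $f,g,h,u$ of successively chosen minimal leading index.
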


\begin{proof}
Note that the subalgebra $S$ of $W$ spanned by the $f_i$ with
$0\leq i\leq p-2$ is solvable. Thus $L$ is not contained in $S$.
Note further that $\dim_k(L)\geq 3$. Therefore there exist derivations
$$f = \sum_{i=-1}^{p-1} \lambda_i f_i$$
$$g= \sum_{i=t}^{p-2} \mu_i f_i$$
belonging to $L$ with $\lambda_{-1}=1$, and $\mu_t=1$, where $t$ is an 
integer such that $0\leq t\leq p-2$. Choose $g$ such that $t$ is 
minimal with this property. But then $[f,g]$ belongs to $L$. Since 
$[f_{-1}, f_t]=$ $(t+1)f_{t-1}$, the minimality of $t\geq 0$ forces 
$t=0$; that is we have
$$g= \sum_{i=0}^{p-2} \mu_i f_i$$
and $\mu_0=1$. Since $\dim_k(L)\geq 3$, it follows that there is
a third element $h$ in $L$ not in the span of $f$, $g$, and hence,
after modifying $h$ by a linear combination of $f$ and $g$, we can 
choose $h$ such that 
$$h = \sum_{i=s}^{p-2} \nu_i f_i$$
for some $s$ such that $1\leq s\leq p-2$ and $\nu_s=1$. Choose
$h$ such that $s$ is minimal with this property. Again by
considering $[f,h]$, one sees that the minimality of $s$ forces
$s=1$. If $L$ is $3$-dimensional, then $L\cong$ $\sl_2(k)$, where
we use that $k$ is algebraically closed.
If $\dim_k(L)\geq 4$, then $L$ contains an element of the form
$$u = \sum_{i=r}^{p-2} \tau_i f_i$$
with $2\leq r\leq p-2$ and $\tau_r=1$. But then applying
$[f,-]$ and $[h,-]$ repeatedly to $u$ shows that $L$ contains
a basis of $W$, hence $L=W$.
\end{proof}

\begin{Remark}
Note that if $\chr(k)=p>2$, then the Witt Lie algebra $W$ contains
indeed a subalgebra isomorphic to $\sl_2(k)$. Let $\mathfrak{f},
\mathfrak{e}, \mathfrak{h}$ be elements of the basis of $\sl_2(k)$ 
such that $[\mathfrak{e},\mathfrak{f}]=\mathfrak{h}$, 
$[\mathfrak{h},\mathfrak{f}]=-2\mathfrak{f}$, and 
$[\mathfrak{h},\mathfrak{e}]=2\mathfrak{e}$.
Then we have a Lie algebra  isomorphism $\sl_2(k)\cong$ 
$\langle f_{-1}, f_0, f_1\rangle$ sending  
$\mathfrak{f}$ to $f_{-1}$, $\mathfrak{h}$ to $2f_{0}$,   
and $\mathfrak{e}$ to $-f_1$.
\end{Remark}

\begin{proof}[Proof of Theorem \ref{loop}]
We use the notation and hypotheses of the notation in Theorem
\ref{loop}, and we assume that the Lie algebra $\HH^1(A)$ is
simple. We show that this forces $\HH^1(A)$ to be a Lie
subalgebra of the Witt Lie algebra $W$ with $\chr(k)=p>2$, and
then the result follows from Proposition \ref{Wittsubalgebras}.

Since $\HH^1(A)$ is simple and since $\Ext_A^1(S,S)$ is nonzero, it 
follows from  Proposition \ref{der-Schur-nonzero} that the canonical
Lie algebra homomorphism  $\Phi: \HH^1(A) \to \HH^1(iAi)$ from
Proposition \ref{der-Schur} is injective. 
By the assumptions, $iAi$ is a local algebra whose quiver has
only one loop. Therefore $A\cong k[x]/(v)$ for some polynomial 
$v \in k[x]$ of degree at least $1$. Since $k$ is algebraically 
closed, $v$ is a product of powers of linear polynomials, say 
$\prod_i (x-\beta_i)^{n_i}$, with pairwise distinct $\beta_i$ and
positive integers $n_i$. Therefore $\HH^1(iAi)$ is a  direct product 
of the Lie algebras corresponding to these factors. It follows that 
$\HH^1(A)$ is isomorphic to a Lie subalgebra of 
$\HH^1(k[x]/((x-\beta)^{n}))$ for some positive integer $n$. 
After applying the automorphism $x\mapsto x+\beta$ of $k[x]$ 
we have that $\HH^1(A)$ is isomorphic to a Lie subalgebra of 
$\HH^1(k[x]/(x^n))$ for some positive integer $n$. 
If $\chr(k)=p$ does not divide $n$ or if $\chr(k)=0$, then the linear 
map sending $x$ to $1$ is not a derivation on $k[x]/(x^n)$, and 
therefore $\HH^1(k[x]/(x^n))$ is solvable in that case. 
Since Lie subalgebras of solvable Lie algebras are solvable, this
contradicts the fact that $\HH^1(A)$ is simple. 
Thus we have $\chr(k)=p>0$ and $n=$ $pm$ for some positive integer $m$.
Since $\chr(k)=p$, it follows that the canonical surjection 
$k[x]/(x^n)\to$ $k[x]/(x^p)$ induces a Lie algebra homomorphism 
$\HH^1(k[x]/(x^n))\to$ $W=$ $\HH^1(k[x]/(x^p))$ with a nilpotent
kernel. Thus $\HH^1(A)$ is not containd in that kernel, and hence
$\HH^1(A)$ is isomorphic to a Lie subalgebra of $W$. 
The result follows.
\end{proof}

To conclude this section we note that although it is not clear which 
simple Lie algebras might occur as $\HH^1(A)$ when $\Ext^1_{A}(S,S)=$
$\{0\}$ for all simple $A$-modules $S$, it easy to show that  
$\HH^{\ast}(A)$ is not a simple graded Lie algebra (with respect to
the Gerstenhaber bracket).

\begin{Proposition}
Let $A$ be a finite dimensional $k$-algebra, and assume that for 
every simple $A$-module $S$ we have $\Ext^1_A(S,S)=\{0\}$. Then
$\HH^{\ast}(A)$ is not a perfect graded Lie algebra. In particular, 
$\HH^{\ast}$ is not simple. 
\end{Proposition}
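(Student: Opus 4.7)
My plan is to prove the stronger statement that the unit $1 \in Z(A) = \HH^0(A)$ does not lie in the graded derived subalgebra $[\HH^{\ast}(A), \HH^{\ast}(A)]$; this implies $\HH^{\ast}(A)$ is not perfect, and in particular not simple, since the derived subalgebra of a simple graded Lie algebra is a graded ideal which cannot be proper.

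The first observation is that the Gerstenhaber bracket satisfies $[\HH^m(A), \HH^n(A)] \subseteq \HH^{m+n-1}(A)$, so the degree-zero component of $[\HH^{\ast}(A), \HH^{\ast}(A)]$ is contained in $[\HH^1(A), \HH^0(A)]$. At the cochain level, the bracket of a derivation $f \in \Der(A)$ with a central element $z \in Z(A)$ evaluates (up to sign) to $f(z)$. The task therefore reduces to showing that $1$ cannot be expressed as a $k$-linear combination of elements of the form $f(z)$ with $f \in \Der(A)$ and $z \in Z(A)$.

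This is where the hypothesis enters: by Lemma \ref{Ext1zero}, every derivation on $A$ preserves $J(A)$, and so induces a derivation $\bar{f}$ on the separable semisimple quotient $A/J(A)$. Since $\HH^1$ vanishes for a separable algebra, $\bar{f}$ is inner, say $\bar{f} = [\bar{c}, -]$, and evaluating at the central image $\bar{z} \in Z(A/J(A))$ gives $\bar{f}(\bar{z}) = 0$, so that $f(z) \in J(A) \cap Z(A)$. Hence $[\HH^{\ast}(A), \HH^{\ast}(A)] \cap \HH^0(A) \subseteq J(A) \cap Z(A)$, which does not contain $1$. The main point requiring care is the splitness hypothesis in Lemma \ref{Ext1zero}; the general case should be handled by first extending scalars to a finite separable splitting field, under which the assumption $\Ext^1_A(S,S) = \{0\}$, the Hochschild cohomology with its Gerstenhaber structure, and perfectness as a graded Lie algebra are all preserved.
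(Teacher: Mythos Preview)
Your core argument coincides with the paper's: both show that $1_A \notin [\HH^1(A),\HH^0(A)]$ by proving $f(z)\in J(A)$ for every derivation $f$ and every central $z$. The paper does this by choosing (via Lemma~\ref{Ezero}) a representative $f$ with $E\subseteq\ker(f)$ and then invoking Lemma~\ref{Ext1zero} to get $f(A)=f(J(A))\subseteq J(A)$; you instead pass to $A/J(A)$ and use that every derivation on a separable algebra is inner, which is the same computation read from the quotient side.

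Your closing remark on scalar extension goes a step beyond the paper, which simply cites its split-case lemmas without further comment. One caveat: a finite \emph{separable} splitting field for $A$ exists only when $A/J(A)$ is already separable over $k$; for instance if $A=k[x]/(x^p-a)$ with $a\notin k^p$, no separable extension of $k$ splits $A$ (and here indeed $1=[d/dx,\,x]\in[\HH^1,\HH^0]$). So your reduction handles exactly the case in which $A/J(A)$ is separable over $k$---which is also the only case the paper's cited lemmas actually cover.
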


\begin{proof}
If $f\in C^1(A,A):=\Hom_k(A,A)$ and  if $g \in C^{0}(A,A):=\Hom_k(k,A)$,
then the Gerstenhaber bracket is given by $[f,g]=f(g)$, i.e. simply 
evaluating $f$ in $g$. Note that $1\in Z(A)=HH^0(A)$.  By Lemma 
\ref{Ezero} and Lemma \ref{Ext1zero}, $f$ preserves $J(A)$ and 
we may assume $E\subseteq \ker (f)$. Therefore the derived Lie 
subalgebra of $\HH^{\ast}(A)$ does not contain $1_A$. 
\end{proof}

\begin{Remark} 
Lemma \ref{AeAderivation} and Proposition \ref{der-Schur} hold
for algebras over an arbitrary commutative ring instead of $k$.
\end{Remark}

\section{Examples} 

Theorem \ref{Ext1-1dim-thm} applies to certain blocks of symmetric
groups. 

\begin{Proposition} \label{symmgroup}
Let $k$ be a field of prime characteristic $p$. Let $A$ be a defect 
$2$ block of a symmetric group algebra $kS_n$ or the principal block 
of $kS_{3p}$. Then $\HH^1(A)$ is a solvable  Lie algebra. 
\end{Proposition}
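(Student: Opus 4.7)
The plan is to apply Theorem~\ref{Ext1-1dim-thm} to a convenient representative of the derived or stable Morita equivalence class of $A$. Blocks of symmetric group algebras are symmetric, and as noted in the discussion following Theorem~\ref{Ext1-1dim-thm}, the Lie algebra structure on $\HH^1$ is invariant under derived equivalences and, for symmetric algebras, under stable equivalences of Morita type. It therefore suffices, in each of the two cases, to exhibit some algebra $A'$ in the relevant equivalence class of $A$ whose $\Ext$-quiver is a simple directed graph.

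For a defect $2$ block of $kS_n$ with $p$ odd, I would first reduce to the Rouquier block of weight $2$ by invoking the derived equivalences of Chuang and Rouquier, combined with the Morita equivalences provided by Scopes' theory. By a theorem of Chuang and Kessar, this Rouquier block is Morita equivalent to the wreath product $B_0(kS_p)\wr S_2$, where $B_0(kS_p)$ is the Brauer tree algebra associated to the straight-line tree with $p-1$ edges. The next step is to read off the $\Ext$-quiver of this wreath product from its quiver-with-relations presentation and verify that it has no loops and at most one arrow between any ordered pair of distinct vertices. For the tame case $p=2$, defect $2$ blocks of symmetric groups are algebras of dihedral type in the sense of Erdmann, so the solvability of $\HH^1$ is already covered by the explicit computations in \cite{FR}.

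For the principal block of $kS_{3p}$, the plan is to analyse the $\Ext$-quiver directly via the decomposition matrix of this specific block (using modular branching rules, or by invoking known structural results on weight $3$ blocks of symmetric groups), and to check that it is a simple directed graph. Theorem~\ref{Ext1-1dim-thm} then yields the solvability of $\HH^1(A)$.

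The step I expect to be the main obstacle is the verification of the $\Ext$-quiver condition in each case. For the Rouquier block this amounts to a bookkeeping calculation with the combinatorics of the Brauer tree of $B_0(kS_p)$ and the wreath product construction; for the principal block of $kS_{3p}$ it requires an explicit analysis of the simple modules of a weight $3$ block through its decomposition matrix. No new Hochschild-cohomological input beyond Theorem~\ref{Ext1-1dim-thm} is needed, but the representation-theoretic input drawn from the modular representation theory of symmetric groups is substantial.
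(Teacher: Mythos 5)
Your overall strategy (verify the hypotheses of Theorem~\ref{Ext1-1dim-thm}, possibly after replacing $A$ by a derived-equivalent algebra) is sound, but the paper's actual proof is far more direct than your plan: it simply cites Scopes \cite{Scopes} (Theorem 1), which establishes for \emph{all} defect $2$ blocks of symmetric groups that simple modules have no self-extensions and that $\dim_k\Ext^1_A(S,T)\leq 1$, together with Martin--Russell \cite{Martin} (Theorem 5.1) for the principal block of $kS_{3p}$, and then applies Theorem~\ref{Ext1-1dim-thm} to $A$ itself. No passage to a Rouquier block, no Chuang--Rouquier or Chuang--Kessar input, and no wreath-product computation is needed. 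Your detour through $B_0(kS_p)\wr S_2$ could in principle be made to work, but you leave exactly the decisive step --- that the $\Ext$-quiver of this wreath product is a simple directed graph --- as an unexecuted ``bookkeeping calculation,'' so as written the argument is incomplete precisely where it matters.

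Two further points need correction. First, your case $p=2$ is vacuous: the defect group of a weight-$w$ block of a symmetric group is a Sylow $p$-subgroup of $S_{pw}$, whose order for $p=2$ is $2$ when $w=1$ and $2^3$ when $w=2$, so there are no defect $2$ blocks of symmetric groups in characteristic $2$; defect $2$ forces $w=2$ and $p$ odd. The appeal to dihedral type and to \cite{FR} is therefore both unnecessary and based on a false premise. Second, for the principal block of $kS_{3p}$ your plan to ``analyse the $\Ext$-quiver directly via the decomposition matrix'' does not suffice: the decomposition matrix does not determine the spaces $\Ext^1_A(S,T)$ between simple modules, so genuinely finer structural information is required. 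That information is exactly what Martin--Russell provide, and citing their theorem (as the paper does) is the missing ingredient rather than an optional shortcut.
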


\begin{proof}
From \cite[Theorem 1]{Scopes} and from \cite[Theorem 5.1]{Martin} we 
have that the simple modules do not self-extend and the $\Ext^1$-space 
between two simple modules is at most one-dimensional. The statement 
follows from Theorem  \ref{Ext1-1dim-thm}.
\end{proof}

\begin{Remark}
A conjecture by Kleshchev and Martin predicts that simple 
$kS_n$-modules in odd characteristic do not admit self-extensions. 
\end{Remark}

\begin{Proposition} \label{tamesymm}
Let $A$ be a tame symmetric algebra over a field $k$ with $3$ 
isomorphism classes of simple modules of type $3\mathcal{A}$ or 
$3\mathcal{K}$. Then $\mathrm{HH}^1(A)$ is a solvable Lie algebra.
\end{Proposition}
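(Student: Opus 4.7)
The plan is to invoke Theorem \ref{Ext1-1dim-thm} directly. The types $3\mathcal{A}$ and $3\mathcal{K}$ are specific families in the Erdmann-style classification of tame symmetric algebras with three isomorphism classes of simple modules; each is described up to Morita equivalence by an explicit quiver with relations. My first step would be to extract the Gabriel quivers of representatives of these two families from the classification tables.

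The next step is a direct inspection: in both cases the quiver has three vertices, no loops at any vertex, and at most one arrow between any ordered pair of distinct vertices, i.e.\ the Ext-quiver is a simple directed graph. By Lemma \ref{Ext1-Lemma1}, this is equivalent to the conditions $\Ext^1_A(S,S)=\{0\}$ for every simple $A$-module $S$ and $\dim_k(\Ext^1_A(S,T)) \leq 1$ for any two simple $A$-modules $S,T$. The hypotheses of Theorem \ref{Ext1-1dim-thm} are therefore satisfied, and $\HH^1(A)$ is solvable.

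Since the Lie algebra structure on $\HH^1(A)$ is Morita invariant, and since the families $3\mathcal{A}$ and $3\mathcal{K}$ are defined only up to Morita equivalence, it is enough to verify the quiver condition on one representative in each family. The main obstacle is thus purely bookkeeping: one must refer carefully to the quiver-and-relations presentation of each family in the classification, and confirm the absence of loops and of parallel arrows in the same direction. Once this is done, the conclusion follows immediately from Theorem \ref{Ext1-1dim-thm}, and no direct Hochschild-cohomological calculation on $A$ itself is required.
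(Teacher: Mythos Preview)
Your proposal is correct and follows essentially the same approach as the paper: consult Erdmann's classification to verify that the Ext-quivers of types $3\mathcal{A}$ and $3\mathcal{K}$ are simple directed graphs (no loops, no parallel arrows), and then apply Theorem~\ref{Ext1-1dim-thm}. The only minor remark is that the equivalence between the quiver condition and the $\Ext^1$ conditions is essentially the definition of the Ext-quiver rather than a consequence of Lemma~\ref{Ext1-Lemma1}, but this does not affect the argument.
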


\begin{proof}
From the list at the end of Erdmann's book \cite{Erd} we have that the 
simple modules in these cases do not self-extend and that the 
$\Ext^1$-space between two simple modules is at most one-dimensional. 
The statement follows from Theorem \ref{Ext1-1dim-thm}.
\end{proof}

As mentioned in the introduction, the above Proposition is part of
more general results on tame algebras in \cite{FR} and \cite{RSS}. 
We note some other examples of algebras whose simple modules do not
have nontrivial self-extensions.

\begin{Theorem} [{\cite[Theorem 3.4]{BNP}}]
Let $G$ be a connected semisimple algebraic group defined and split 
over the field $\mathbb{F}_p$ with $p$ elements, and $k$ be an 
algebraic closure of $\mathbb{F}_p$. Assume $G$ is almost simple and 
simply connected and let $G(\mathbb{F}_q)$ be the finite Chevalley 
group consisting of $\mathbb{F}_q$-rational points of G where 
$q= p^r$ for a non-negative integer $r$. Let $h$ be the Coxeter 
number of $G$. For  $r\geq 2$ and $p\geq 3(h-1)$, we have
$\Ext^1_{kG(\mathbb{F}_q)}(S,S)=\{0\}$ for every simple 
$kG(\mathbb{F}_q)$-module $S$.
\end{Theorem}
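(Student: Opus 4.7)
The plan is to reduce the vanishing of self-extensions over the finite group $G(\F_q)$ to vanishing statements in rational cohomology of the ambient algebraic group $G$, and then to exploit the hypothesis $p\geq 3(h-1)$ through the strong linkage principle.

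First I would classify the simple $kG(\F_q)$-modules. By Steinberg's tensor product theorem, applied to $q=p^r$, every simple $kG(\F_q)$-module is the restriction to $G(\F_q)$ of a simple rational $G$-module $L(\lambda)$ with $\lambda$ a $p^r$-restricted dominant weight, and distinct such $\lambda$ yield non-isomorphic restrictions. So the question becomes: show $\Ext^1_{kG(\F_q)}(L(\lambda),L(\lambda))=\{0\}$ for every $\lambda\in X_r(T)$.

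Next, I would invoke the comparison between $\Ext$ groups over $G(\F_q)$ and $\Ext$ groups over $G$, in the style of Cline--Parshall--Scott--van der Kallen. The idea is to view $L(\lambda)\tenk L(\lambda)^*$ as a rational $G$-module and to relate
\[
\Ext^1_{kG(\F_q)}\bigl(L(\lambda),L(\lambda)\bigr)
\ \cong\
\mathrm{H}^1\bigl(G(\F_q),\,L(\lambda)\tenk L(\lambda)^*\bigr)
\]
to rational cohomology of $G$ with coefficients in $L(\lambda)\tenk L(\lambda)^*$ twisted by an induced module of the form $\mathrm{ind}_{G(\F_q)}^{G}(k)$, which in the range $r\geq 2$ filters with composition factors whose highest weights are sufficiently large multiples of the Frobenius twist. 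The outcome is a five-term exact sequence (or a spectral sequence) bounding $\Ext^1_{kG(\F_q)}(L(\lambda),L(\lambda))$ by rational $\Ext^1$-groups of the form $\Ext^1_G(L(\lambda),L(\lambda)\tenk V^{(r)})$, for $V$ a rational $G$-module of controlled highest weight.

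Finally, I would apply the strong linkage principle together with Jantzen's sum formula: under the assumption $p\geq 3(h-1)$, the weights $\mu$ linked to $\lambda$ through the affine Weyl group action in the relevant alcove configuration are separated by more than the Frobenius twist can reach, so each comparison term $\Ext^1_G(L(\lambda),L(\lambda)\tenk V^{(r)})$ vanishes. Combined with the initial vanishing $\Ext^1_G(L(\lambda),L(\lambda))=\{0\}$ in this range, this forces $\Ext^1_{kG(\F_q)}(L(\lambda),L(\lambda))=\{0\}$. The main obstacle I expect is not the classification step but the bookkeeping in the spectral sequence of the second step: one must show that the pair of hypotheses $r\geq 2$ and $p\geq 3(h-1)$ simultaneously kill the higher terms involving Frobenius-twisted weights for \emph{every} $p^r$-restricted $\lambda$, which is where sharp estimates relating $p$, $h$, and the $p^r$-alcove geometry become essential.
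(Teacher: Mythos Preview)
The paper does not contain a proof of this statement: it is quoted verbatim as \cite[Theorem 3.4]{BNP} and used purely as an illustrative example of algebras whose simple modules have no self-extensions. There is therefore nothing in the paper to compare your proposal against.

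For what it is worth, your sketch is a reasonable outline of the strategy actually employed in \cite{BNP}: parametrise the simple $kG(\F_q)$-modules by $p^r$-restricted highest weights via Steinberg, compare finite-group cohomology with rational cohomology of $G$ in the manner of Cline--Parshall--Scott--van der Kallen, and then kill the comparison terms using linkage and the bound $p\geq 3(h-1)$. The substantive work, as you correctly anticipate, lies in the weight bookkeeping for the Frobenius-twisted terms, and that is precisely what \cite{BNP} carries out in detail; your proposal is an outline rather than a proof, but it points in the right direction.
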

  
\begin{Remark}
Let $G$ be a simple algebraic group over a field of characteristic  
$p>3$, not of  type $A_1, G_2$ and $F_4$. Proposition 1.4 in \cite{TZ} 
implies that not having self-extensions does not allow to lift to 
characteristic zero certain simple modular representations.  Therefore, 
for these cases the Lie structure of $\HH^1$ plays a central role.  
\end{Remark}  

In the context of blocks with abelian defect groups one expects
(by Brou\'e's abelian defect conjecture) every block of a finite
group algebra with an abelian defect group $P$ to be derived
equivalent to a twisted group algebra of the form 
$k_\alpha(P\rtimes E)$, where $E$ is the inertial quotient of the
block and where $\alpha$ is a class in $H^2(E; k^\times)$, inflated to
$P\rtimes E$ via the canonical surjection $P\rtimes E\to$ $E$. 
Thus the following observation is relevant in cases where 
Brou\'e's abelian defect conjecture is known to hold (this includes
blocks with cyclic and Klein four defect).

\begin{Proposition} \label{kPEnoloops}
Let $k$ be a field of prime characteristic $p$. Let $P$ be a 
finite $p$-group and $E$ an abelian $p'$-subgroup of 
$\Aut(P)$ such that $[P,E]=P$. Set $A=$ $k(P\rtimes E)$. Suppose that 
$k$ is large enough for $E$, or equivalently, that $A$ is split. For 
any simple $A$-module $S$ we have $\Ext^1_A(S,S)=$ $\{0\}$. 
\end{Proposition}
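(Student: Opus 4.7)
The plan is to translate the $\Ext^1$ vanishing statement into a computation of $H^1$ of $G = P \rtimes E$ with trivial coefficients, and then use the hypothesis $[P,E]=P$ together with the coprimality of $|E|$ and $p$ to finish.

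First I would identify the simple $A$-modules. Since $P$ is a normal $p$-subgroup of $G$, it lies in the Jacobson radical of $A$, so the simple $A$-modules are exactly the simple $kE$-modules inflated via the surjection $G \to E$. Because $E$ is abelian and $k$ is large enough for $E$, these are the one-dimensional modules $k_\chi$ indexed by characters $\chi : E \to k^\times$, extended by the trivial action of $P$.

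Next I would compute $\Ext^1_A(S,S)$ cohomologically. Writing $S = k_\chi$, the standard identification gives
\[
\Ext^1_A(S,S) \;\cong\; H^1\bigl(G;\, \Hom_k(S,S)\bigr).
\]
Since $S$ is one-dimensional, $\Hom_k(S,S) \cong k$, and the two copies of $\chi$ (one from the left, one contragredient from the right) cancel, so $G$ acts trivially on $\Hom_k(S,S)$. Hence
\[
\Ext^1_A(S,S) \;\cong\; H^1(G; k) \;\cong\; \Hom(G^{\mathrm{ab}}, k^+),
\]
where $k^+$ is the underlying additive group of $k$.

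Then I would compute $G^{\mathrm{ab}}$ for the semidirect product $G = P \rtimes E$. In general one has $G^{\mathrm{ab}} \cong \bigl(P/[P,P][P,E]\bigr) \times E^{\mathrm{ab}}$, and since $E$ is abelian, $E^{\mathrm{ab}} = E$. The hypothesis $[P,E] = P$ immediately forces $P/[P,P][P,E]$ to be trivial, so $G^{\mathrm{ab}} \cong E$. Finally, $E$ is a $p'$-group while $k^+$ is a $p$-torsion abelian group (every element has additive order dividing $p$), so $\Hom(E, k^+) = \{0\}$, completing the argument.

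The main conceptual point—and the only place the hypotheses are really used—is the abelianization computation in the penultimate step: the condition $[P,E]=P$ is precisely what is needed to ensure that $P$ contributes nothing to $G^{\mathrm{ab}}$, and the $p'$-condition on $E$ is what ensures the remaining $E$-part contributes nothing to $\Hom(-,k^+)$. The remaining steps are standard reductions, and no hard obstacle is expected.
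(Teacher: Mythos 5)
Your proof is correct and follows essentially the same route as the paper's: both reduce $\Ext^1_A(S,S)$ to $H^1(P\rtimes E;k)=\Hom(G^{\mathrm{ab}},k^+)$ and then use $[P,E]=P$ to see that $G^{\mathrm{ab}}$ is a $p'$-group, which cannot map nontrivially to the additive group of $k$. The only cosmetic difference is in the reduction step: the paper tensors with the one-dimensional module $S$ (a Morita self-equivalence) to replace $S$ by the trivial module, whereas you pass through $H^1(G;\Hom_k(S,S))$ and observe that the action on $\Hom_k(S,S)$ is trivial.
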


\begin{proof}
Since $E$ is abelian, it follows that $\dim_k(S)=1$, and hence
that $S\tenk-$ is a Morita equivalence. This Morita equivalence
sends the trivial $A$-module $k$ to $S$, hence induces an isomorphism
$\Ext_A^1(k,k)\cong$ $\Ext_A^1(S,S)$. It suffices therefore
to show the statement for $k$ instead of $S$. That is, we need
to show that $H^1(P\rtimes E; k)=$ $\{0\}$, or equivalently, that
there is no nonzero group homomorphism from $P\rtimes E$ to the
additive group $k$. Since $[P,E]=P$, it follows that every abelian 
quotient of $P\rtimes E$ is isomorphic to a quotient of $E$, hence 
has order prime to $p$. The result follows.
\end{proof}

\begin{Example}
If $B$ is a block of a finite group algebra over an algebraically 
closed field $k$ of characteristic $p>0$ with a nontrivial cyclic 
defect group $P$ and nontrivial inertial quotient $E$, then 
$\HH^1(B)$ is a solvable Lie algebra, isomorphic to $\HH^1(kP)^E$, 
where $E$ acts on $\HH^1(kP)$ via the group action of $E$ on $P$.
Indeed, $B$ is derived equivalent to the Nakayama algebra
$k(P\rtimes E)$, which satisfies the hypotheses of Theorem 
\ref{Ext1-1dim-thm} (thanks to the assumption $E\neq$ $1$, which
implies $[P,E]=P$). 
Note that $kP$ is isomorphic to the truncated  polynomial algebra 
$k[x]/(x^{p^d})$, where $p^d=$ $|P|$. 
\end{Example}

\bigskip\bigskip

\noindent\smallskip
{\it Acknowledgements.} 
The second author has been  supported by the projects,  
``Oberwolfach Leibniz Fellows'', DAAD Short-Term Grant
(57378443) and 19880/GERM/15. He also thanks the 
Department of Mathematics of the University of Leicester.


\end{document}